\documentclass[a4paper]{article}
\usepackage[english]{babel}
\usepackage{a4wide}
\usepackage{mathrsfs}
\usepackage{amsmath,amssymb,amsthm}
\usepackage{mathtools}
\usepackage{ifpdf}
\usepackage[numbers,sort&compress]{natbib}
\usepackage{graphicx}

\ifpdf%
\usepackage[bookmarks=false,
pdfstartview=FitH,linkbordercolor={0.5 1 1},
citebordercolor={0.5 1 0.5},unicode,
hyperindex,%
hypertexnames=false]{hyperref}
\else
\fi

\newtheorem{theorem}{Theorem}
\newtheorem{lemma}{Lemma}
\newtheorem{proposition}{Proposition}

\theoremstyle{definition}
\newtheorem{problem}{Question}
\newtheorem{altproblem}{Question}
\newtheorem{remark}{Remark}

\newtheorem{example}{Example}

\newcommand{\setA}{\mathscr{A}}
\newcommand{\setB}{\mathscr{B}}

\newcommand{\setM}{\mathscr{M}}
\newcommand{\setH}{\mathscr{H}}
\newcommand{\setF}{\mathscr{F}}

\title{On the boundedness of infinite matrix products with alternating factors from two sets of matrices}

\author{Victor Kozyakin\thanks{Kharkevich Institute for Information
Transmission Problems, Russian Academy of Sciences, Bolshoj Karetny lane 19, Moscow 127051, Russia, e-mail: kozyakin@iitp.ru}}

\begin{document}
\date{}
\maketitle

\begin{abstract}
We consider the question of boundedness of matrix products $A_{n}B_{n}\cdots A_{1}B_{1}$ with factors from two sets of matrices, $A_{i}\in\setA$ and $B_{i}\in\setB$, due to an appropriate choice of matrices $\{B_{i}\}$. It is assumed that for every sequence of matrices $\{A_{i}\}$ there is a sequence of matrices $\{B_{i}\}$ for which the sequence of matrix products $\{A_{n}B_{n}\cdots A_{1}B_{1}\}_{n=1}^{\infty}$ is norm bounded. Some situations are described where in this case the norms of the matrix products $A_{n}B_{n}\cdots A_{1}B_{1}$ are uniformly bounded, that is, $\|A_{n}B_{n}\cdots A_{1}B_{1}\|\le C$ for all natural numbers $n$, where $C > 0$ is a constant independent of the sequence $\{A_{i}\}$ and the corresponding sequence $\{B_{i}\}$. For the general case, the question of the validity of the corresponding statement remains open.
\medskip

\noindent\textbf{Key words and phrases:} infinite matrix products, alternating factors, boundedness of matrix products
\medskip

\noindent\textbf{2020 Mathematics Subject Classification:} 40A20; 39A22; 93B05; 93C55
\end{abstract}

\section{Introduction}\label{S:intro}

Let $\setM$ be a set of matrices. By $\setM^{\infty}$ we denote the set of infinite sequences of matrices $\{M_{i}\in\setM\colon i=1,2,\ldots\}$, and by $\setM^{n}$, where $n=1,2,\ldots\,$, the set of finite ordered sets $\{M_{i}\in\setM\colon i=1,2,\ldots,n\}$ consisting of $n$ matrices. By $\setM(p,q)$ we denote the space of real $(p\times q)$-matrices with the topology of element-wise convergence.

In the theory of matrix products the following statement is well known (see for example,~\cite{Koz:AiT90:10:e, DaubLag:LAA92, BerWang:LAA92, Gurv:LAA95, Hartfiel:02}):
\begin{proposition}\label{St:1}
Let $\setA\subset\setM(N,N)$ be a finite set of matrices in which for each sequence of matrices $\{A_{n}\}\in\setA^{\infty}$ the sequence of norms $\{\|A_{n}\cdots A_{1}\|\}_{n=1}^{\infty}$ is bounded. Then all sequences of norms $\{\|A_{n}\cdots A_{1}\|\}_{n=1}^{{\infty}}$ are uniformly bounded, i.e., there is a constant $C > 0$ such that for any sequence of matrices $\{A_{n}\}\in\setA^{\infty}$ for all $n=1,2,\ldots$ the inequalities $\|A_{n}\cdots A_{1}\|\le C$ hold.
\end{proposition}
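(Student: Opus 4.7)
My plan is to argue by contradiction, converting the hypothetical failure of uniform boundedness into an infinite sequence in $\setA^{\infty}$ along which the partial products $\|A_n\cdots A_1\|$ blow up, in direct contradiction of the standing pointwise-boundedness hypothesis.

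Assume the conclusion fails. Then for each positive integer $k$ there exist $n_k$ and matrices $A_1^{(k)},\ldots,A_{n_k}^{(k)}\in\setA$ with $\|A_{n_k}^{(k)}\cdots A_1^{(k)}\|>k$. Truncating each such word to its shortest prefix on which the norm first exceeds $k$, I may additionally arrange $k<\|A_{n_k}^{(k)}\cdots A_1^{(k)}\|\le Mk$, where $M=\max_{A\in\setA}\|A\|$, and $n_k\to\infty$ (only finitely many distinct norm values are available at each fixed length). Since $\setA$ is finite, $\setA^{\infty}$ is compact metrizable in the product topology, and a diagonal pigeonhole over the finite alphabet at each coordinate picks out a subsequence along which these words stabilize coordinatewise to a limit $\{A_n^*\}\in\setA^{\infty}$, to which the hypothesis applies to give a constant $C^*$ with $\|A_n^*\cdots A_1^*\|\le C^*$ for every $n$.

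The decisive step, and the main obstacle, is to transfer the blow-up of $\|A_{n_k}^{(k)}\cdots A_1^{(k)}\|$ to a blow-up along the extracted limit sequence. The issue is plain: the large-norm position $n_k$ outruns the depth $m$ at which coordinatewise agreement has set in, so submultiplicativity yields only $\|A_{n_k}^{(k)}\cdots A_{m+1}^{(k)}\|\ge k/C^*$, i.e.\ large norms on arbitrarily long tails but not along one fixed infinite sequence. To close the gap I would invoke the joint spectral radius $\rho(\setA)$ together with the Berger--Wang identity: failure of uniform boundedness forces $\rho(\setA)\ge 1$, and if $\rho(\setA)>1$ there is a finite word in $\setA$ whose product has spectral radius strictly greater than $1$, whose periodic repetition then supplies an explicit sequence in $\setA^{\infty}$ with partial product norms tending to infinity. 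The threshold case $\rho(\setA)=1$ is handled by a further pigeonhole on the finite alphabet, which locates a periodic word whose product has a nontrivial Jordan block at an eigenvalue of modulus one, so that its periodic repetition again contradicts pointwise boundedness. In both cases the finiteness of $\setA$ is the ingredient that converts an abstract supremum into a concrete periodic witness.
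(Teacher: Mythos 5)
The paper does not actually prove Proposition~\ref{St:1}; it quotes it from the cited literature, so there is no in‑paper argument to compare against line by line. Judged on its own terms, your proposal has a genuine gap, and it sits exactly where you placed all the weight. Your first phase (extracting words with norms in $(k,Mk]$ and a diagonal limit in $\setA^{\infty}$) is harmless but, as you yourself observe, proves nothing: the limit sequence need not inherit the blow-up. Everything therefore rests on the patch. There the $\rho(\setA)>1$ branch is correct but vacuous: under the standing hypothesis no finite product can have spectral radius exceeding one, since its periodic repetition would already have unbounded partial products; so by Berger--Wang the hypothesis forces $\rho(\setA)\le 1$, and the whole content of the proposition is the case $\rho(\setA)=1$. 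In that case your argument reduces to the assertion that ``a further pigeonhole on the finite alphabet locates a periodic word whose product has a nontrivial Jordan block at an eigenvalue of modulus one.'' No such pigeonhole is exhibited (there are infinitely many words and no finiteness is in sight), and the assertion is false: failure of uniform boundedness need not be witnessed by any periodic sequence.

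A concrete counterexample to that key claim: take $\theta$ with $\theta/\pi$ irrational and $\setA=\{A_{+},A_{-}\}$ with $A_{\pm}=\left(\begin{smallmatrix}1&\pm1\\0&e^{i\theta}\end{smallmatrix}\right)$ (realify to $4\times4$ by replacing $1,\pm1,e^{i\theta}$ with $I_{2},\pm I_{2},R_{\theta}$). Every nonempty product of length $n$ is upper triangular with diagonal $(1,e^{in\theta})$; these entries are distinct, so the product is diagonalizable with all eigenvalues of modulus one, hence power-bounded, and consequently \emph{every} periodic infinite sequence has bounded partial products and \emph{no} finite product has a peripheral Jordan block. Yet the semigroup is unbounded and $\rho(\setA)=1$: the off-diagonal entry of $A_{\epsilon_{n}}\cdots A_{\epsilon_{1}}$ equals $\sum_{k=1}^{n}\epsilon_{k}e^{i(k-1)\theta}$, which the choices $\epsilon_{k}=\operatorname{sign}\cos((k-1)\theta)$ drive to infinity linearly. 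This family does not satisfy the hypothesis of Proposition~\ref{St:1} (the displayed \emph{aperiodic} sequence is unbounded, as the proposition's contrapositive demands), but it refutes your intermediate lemma: the required witness cannot be found among periodic words, so your method terminates without a contradiction. The arguments that do close this case either exploit extra structure to concatenate finite blocks with quantitative lower bounds --- precisely what the paper's Theorems~\ref{T1} and~\ref{T2} do in the alternating setting, using invertibility of $AB$ or nonnegativity with nonzero rows to guarantee that earlier factors cannot annihilate the growth of later blocks --- or, for general finite $\setA$, pass to a common invariant subspace (an irreducible family with $\rho\le1$ admits an extremal norm and has a bounded semigroup) and induct on the dimension, building an aperiodic bad sequence from the triangular coupling. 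Some such mechanism is needed in place of the nonexistent periodic witness.
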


Some time ago an attempt was made in~\cite{Koz:DDNS18} to extend the analysis of the convergence of matrix products to the matrix products with alternating factors from two sets of matrices. The need for such an analysis was motivated by the problem of stabilization of switching linear systems with discrete time. More specifically, in~\cite{Koz:DDNS18} the following question was considered: Let $\setA\subset\setM (N,M)$ and $\setB\subset\setM (M,N)$ be finite sets of matrices, what can be said about the convergence rate to zero of the matrix products
\[
A_{n}B_{n}\cdots A_{1}B_{1},\qquad A_{i}\in\setA,~B_{i}\in\setB,
\]
provided that convergence to zero $A_{n}B_{n}\cdots A_{1}B_{1}\to0$ for any sequence of matrices $\{A_{i}\}$ can be guaranteed by an appropriate choice of a sequence of matrices $\{B_{i}\}$? As it turned out (see~{\cite[Theorem~2]{Koz:DDNS18}}), the following statement is true:
\begin{proposition}\label{T:main}
Let $\setA\subset\setM(N,M)$ and $\setB\subset\setM (M,N)$ be sets of matrices with the property, that for any sequence of matrices $\{A_{n}\}\in\setA^{\infty}$, one can specify a sequence of matrices $\{B_{n}\}\in\setB^{\infty}$ such that $A_{n}B_{n}\cdots A_{1}B_{1}\to0$. Then there are constants $C > 0$ and $\lambda\in(0,1)$ such that for every sequence of matrices $\{A_{n}\}\in\setA^{\infty}$ there is a sequence of matrices $\{B_{n}\}\in\setB^{\infty}$, for which $\|A_{n}B_{n}\cdots A_{1}B_{1}\|\le C\lambda^{n}$ for $n=1,2,\ldots\,$.
\end{proposition}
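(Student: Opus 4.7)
My plan is to exploit the strong hypothesis $\pi_n\to0$ together with compactness of $\setA^\infty$ to obtain a \emph{uniform} upper bound on the number of steps required to halve the norm of the product, and then to iterate this block by block. The strategy is close in spirit to the standard ``pointwise implies uniform via open cover of a compact space'' argument, and it avoids any appeal to Fekete/submultiplicativity.

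Since the hypothesis gives, for every $\{A_i\}\in\setA^\infty$, some $\{B_i\}\in\setB^\infty$ with $\pi_n\to 0$, in particular there exist $n\ge 1$ and a truncation $(B_1,\ldots,B_n)\in\setB^n$ with $\|A_nB_n\cdots A_1B_1\|<1/2$. For each $M\ge 1$ put
\[
\mathcal{V}_M=\bigl\{\{A_i\}\in\setA^\infty:\ \exists(B_1,\ldots,B_M)\in\setB^M,\ \exists n\le M,\ \|A_nB_n\cdots A_1B_1\|<1/2\bigr\}.
\]
The finiteness of $\setA$ and $\setB$ has three consequences. First, $\mathcal{V}_M$ is open in the product topology on $\setA^\infty$: for each fixed tuple $(B_1,\ldots,B_M)$ and each $n\le M$ the halving condition depends only on the first $n$ coordinates of $\{A_i\}$ and is therefore clopen (a finite union of cylinders), and $\mathcal{V}_M$ is a finite union of such sets. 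Second, the family $\{\mathcal{V}_M\}_{M\ge 1}$ is nondecreasing, by padding a witnessing tuple with arbitrary $B$'s. Third, $\bigcup_M\mathcal{V}_M=\setA^\infty$ by the observation above. Compactness of $\setA^\infty$ then yields a single index $M^*$ with $\mathcal{V}_{M^*}=\setA^\infty$: for every $\{A_i\}\in\setA^\infty$ there exist $n\le M^*$ and $(B_1,\ldots,B_n)\in\setB^n$ with $\|A_nB_n\cdots A_1B_1\|<1/2$.

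I then iterate. Given $\{A_i\}$, choose $n_1\le M^*$ and $(B_1,\ldots,B_{n_1})$ with $\|\pi_{n_1}\|<1/2$; apply the uniform statement to the shifted sequence $(A_{n_1+1},A_{n_1+2},\ldots)\in\setA^\infty$ to get $n_2\le M^*$ and $(B_{n_1+1},\ldots,B_{n_1+n_2})$ whose block product has norm $<1/2$; since $\pi_{n_1+n_2}$ is the product of this second block with $\pi_{n_1}$, we have $\|\pi_{n_1+n_2}\|<1/4$; and so on. After $k$ blocks one obtains $T_k:=n_1+\cdots+n_k\le kM^*$ and $\|\pi_{T_k}\|<2^{-k}$. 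Setting $\lambda:=2^{-1/M^*}<1$ and $K_0:=\max\{\|AB\|:A\in\setA,\,B\in\setB\}$, one has $\|\pi_{T_k}\|\le 2\lambda^{T_k}$, and for intermediate $n\in[T_{k-1},T_k)$ the crude bound $\|\pi_n\|\le K_0^{n-T_{k-1}}\|\pi_{T_{k-1}}\|$ combines with the preceding to give $\|\pi_n\|\le C\lambda^n$ for all $n$, with $C=2K_0^{M^*}$, as claimed.

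The decisive step is the reduction to the uniform $M^*$ via compactness, and the point that really needs checking is the openness of $\mathcal{V}_M$; in the present setting this is purely combinatorial because $\setA$ is finite. If one wished to replace finiteness by mere compactness of $\setA$ and $\setB$, the analogous open-cover argument would require a continuity estimate ensuring that the strict inequality $\|\cdot\|<1/2$ is stable under small perturbations of $\{A_i\}$, together with a replacement of the combinatorial cylinder structure by appropriate neighbourhoods in the product topology; this extra analytic layer is, in my view, where the main technical difficulty in a more general version of the result would lie.
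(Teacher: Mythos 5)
Your argument is correct, but note that this statement is not proved in the present paper at all: it is quoted as Proposition~\ref{T:main} from~\cite[Theorem~2]{Koz:DDNS18}, so there is no internal proof to compare against. Your route --- pass from the pointwise hypothesis ``some truncation has norm $<1/2$'' to a uniform horizon $M^{*}$ via compactness of $\setA^{\infty}$ (equivalently K\"onig's lemma, since $\mathcal{V}_{M}$ is a finite union of cylinders), then concatenate contracting blocks --- is essentially the standard argument behind the cited result and behind Proposition~\ref{St:3}; the only caveats are cosmetic: finiteness of $\setA$ and $\setB$ must be taken as a standing hypothesis (the displayed statement omits the word ``finite'', though the surrounding text supplies it), and the constant should read $C=2\max(1,K_{0})^{M^{*}}$ to cover the case $K_{0}<1$.
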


The purpose of this paper is to continue the study of the properties of matrix products with alternating factors from two sets of matrices. We say that the sequence of matrices $\{A_{n}\}\in\setA^{\infty}$ is \emph{$\setB$-right-bounded}\footnote{Here the prefix `right-' means that in the corresponding matrix products the factors $B_{i} $ are applied to the factors $A_{i}$ on the right.} if there is a constant $C=C(\{A_{n}\})$ such that
\begin{equation}\label{E:defABbounded}
\exists~\{B_{n}\}\in\setB^{\infty}:\quad \|A_{n}B_{n}\cdots A_{1}B_{1}\|\le C, \quad n=1,2,\ldots\,,
\end{equation}
that is, if there exists a sequence of matrices $\{B_{n}\}\in\setB^{\infty}$ for which the sequence of norms
\[
\{\|A_{n}B_{n}\cdots A_{1}B_{1}\|\}_{n=1}^{\infty}
\]
is bounded. We shall be interested in the question whether Proposition~\ref{St:1} (or any of its analogues) applies to matrix products with alternating factors from two sets of matrices, apparently first formulated in~\cite[Question 2]{Koz:DDNS18}:

\begin{problem}\label{Q:1}
Let $\setA$ and $\setB$ be finite sets of matrices such that every sequence of matrices $\{A_{n}\}\in\setA^{\infty}$ is $\setB$-right-bounded. In this case, is there a universal constant $C > 0$ such that for any sequence of matrices $\{A_{n}\}\in\setA^{\infty}$ inequalities~\eqref{E:defABbounded} hold?
\end{problem}

Question~\ref{Q:1} may be rephrased in a slightly different form, more suitable for further consideration.

Given a natural $n$, for a pair of sets of matrices $\{A_{i}\}\in\setA^{n}$ and $\{B_{i}\}\in\setB^{n}$ we introduce the value
\[
\nu_{n}(\{A_{i}\},\{B_{i}\}):= \max_{1\le k\le n}\|A_{k}B_{k}\cdots A_{1}B_{1}\|.
\]
Similarly, for each pair of infinite matrix sequences $\{A_{i}\}\in\setA^{\infty}$ and $\{B_{i}\}\in\setB^{\infty}$, we define the value of
\[
\nu_{\infty}(\{A_{i}\},\{B_{i}\}):= \sup_{n\ge1}\nu_{n}(\{A_{i}\}_{i=1}^{n},\{B_{i}\}_{i=1}^{n}).
\]
It is easy to see that the value $\nu_{\infty}(\{A_{i}\},\{B_{i}\})$ also admits the representation
\[
\nu_{\infty}(\{A_{i}\},\{B_{i}\})=\sup_{n\ge1}\|A_{n}B_{n}\cdots A_{1}B_{1}\|.
\]
Obviously, the values of $\nu_{n}(\{A_{i}\},\{B_{i}\})$ are finite for any integer $n\ge 1$. At the same time, the finiteness of the value of $\nu_{\infty}(\{A_{i}\},\{B_{i}\})$ for an arbitrary sequence of matrices $\{A_{i}\}\in\setA^{\infty}$ and $\{B_{i}\}\in\setB^{\infty}$ follows a'priori nowhere. But if the sequence of matrices $\{A_{i}\}\in\setA^{\infty}$ is $\setB$-right-bounded, then there exists for it another such sequence $\{B_{i}\}\in\setB^{\infty}$ for which the value $\nu_{\infty}(\{A_{i}\},\{B_{i}\})$ is finite. In other words, in this case, for any sequence of matrices $\{A_{i}\}\in\setA^{\infty}$, the inequality
\[
\inf_{\{B_{i}\}\in\setB^{\infty}}\nu_{\infty}(\{A_{i}\},\{B_{i}\})<\infty
\]
always holds.
\begingroup\renewcommand\thealtproblem{\ref{Q:1}$'$}%
\begin{altproblem}\label{Q:1x}
Let $\setA$ and $\setB$ be finite sets of matrices such that every sequence of matrices $\{A_{n}\}\in\setA^{\infty}$ is $\setB$-right-bounded. In this case, is there a universal constant $C > 0$ such that
\[
\inf_{\{B_{i}\}\in\setB^{\infty}}\nu_{\infty}(\{A_{i}\},\{B_{i}\})\le C
\]
for any sequence of matrices $\{A_{i}\}\in\setA^{\infty}$\,?
\end{altproblem}
\endgroup

Question~\ref{Q:1x} can also be rephrased as follows:

\begingroup\renewcommand\thealtproblem{\ref{Q:1}$''$}%
\begin{altproblem}\label{Q:1xx}
Let $\setA$ and $\setB$ be finite sets of matrices such that every sequence of matrices $\{ A_{n}\}\in\setA^{\infty}$ is $\setB$-right-bounded. In this case, if the inequality 
\begin{equation}\label{E:defminf}
\mu_{\infty}(\setA,\setB):= \adjustlimits\sup_{\{A_{i}\}\in\setA^{\infty}}\inf_{\{B_{i}\}\in \setB^{\infty}}\nu_{\infty}(\{A_{i}\},\{B_{i}\})<\infty
\end{equation} 
hold?
\end{altproblem}
\endgroup

We note that similar issues of pointwise stabilizability of switching systems have been considered in~\cite{LJP:HSCC16, JunMas:SIAMJCO17, DJM:SCL20}.

\section{Main Lemma}\label{S:main}

Let us introduce finite analogs of the quantity $\mu_{\infty}(\setA,\setB)$, by setting
\begin{equation}\label{E:defmnen}
\mu_{n}(\setA,\setB)=\max_{\{A_{i}\}\in\setA^{n}} \min_{\{B_{i}\}\in\setB^{n}}\nu_{n}(\{A_{i}\},\{B_{i}\}),\qquad n=1,2,\ldots\,.
\end{equation}

\begin{lemma}\label{L:1}
Inequality~\eqref{E:defminf} is satisfied if and only if there is a constant $C>0$ such that
\begin{equation}\label{E:munbound}
\sup_{n\ge1}\mu_{n}(\setA,\setB)\leq C.
\end{equation}
\end{lemma}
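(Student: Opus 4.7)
The plan is to prove the two implications separately, with the reverse implication requiring a compactness (K\H{o}nig-style) extraction argument.

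For the easy direction, assume~\eqref{E:defminf} and set $C := \mu_{\infty}(\setA,\setB)$. Fix $n\ge1$ and an arbitrary $\{A_i\}_{i=1}^n\in\setA^n$. Extend it to some infinite sequence $\{A_i\}_{i=1}^{\infty}\in\setA^{\infty}$; by the definition of $\mu_{\infty}$ and the definition of infimum, for every $\varepsilon>0$ there exists $\{B_i\}_{i=1}^{\infty}\in\setB^{\infty}$ with $\nu_{\infty}(\{A_i\},\{B_i\})\le C+\varepsilon$. Restricting to the first $n$ indices gives $\nu_{n}(\{A_i\}_{i=1}^n,\{B_i\}_{i=1}^n)\le C+\varepsilon$, so the minimum in~\eqref{E:defmnen}, which is attained because $\setB^n$ is finite, is at most $C+\varepsilon$. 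Letting $\varepsilon\downarrow 0$ and then taking the maximum over the finite set $\setA^n$ yields $\mu_n(\setA,\setB)\le C$.

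For the reverse direction, assume~\eqref{E:munbound}. Fix an arbitrary $\{A_i\}\in\setA^{\infty}$. For each $n$, since $\setA^n$ and $\setB^n$ are finite, the minimum in~\eqref{E:defmnen} is attained: pick $\{B_i^{(n)}\}_{i=1}^n\in\setB^n$ with $\nu_n(\{A_i\}_{i=1}^n,\{B_i^{(n)}\}_{i=1}^n)\le \mu_n(\setA,\setB)\le C$. The key step is now a diagonal extraction: since $\setB$ is finite, among the infinitely many values $\{B_1^{(n)}\}_{n\ge1}$ some element $B_1^{*}\in\setB$ occurs infinitely often; among that subsequence some $B_2^{*}\in\setB$ occurs infinitely often as the second coordinate; and so on. A diagonal choice produces $\{B_i^{*}\}\in\setB^{\infty}$ with the property that for every $k$ there are arbitrarily large $n$ with $(B_1^{(n)},\dots,B_k^{(n)})=(B_1^{*},\dots,B_k^{*})$. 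For any such $n\ge k$ we have
\[
\|A_kB_k^{*}\cdots A_1B_1^{*}\|\le \nu_n(\{A_i\}_{i=1}^n,\{B_i^{(n)}\}_{i=1}^n)\le C,
\]
and since $k$ is arbitrary this gives $\nu_{\infty}(\{A_i\},\{B_i^{*}\})\le C$. Hence $\inf_{\{B_i\}\in\setB^{\infty}}\nu_{\infty}(\{A_i\},\{B_i\})\le C$, and taking the supremum over $\{A_i\}\in\setA^{\infty}$ gives~\eqref{E:defminf}.

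The main obstacle is the reverse direction, because the near-optimal finite sequences $\{B_i^{(n)}\}$ are not a priori compatible as $n$ varies: a naive truncation argument fails, and one has to exploit the finiteness of $\setB$ to pass to a limiting infinite sequence via the diagonal/compactness argument above. Finiteness of $\setA$ is not actually needed for either implication (only finiteness of $\setB$ in the reverse direction); finiteness of $\setA$ would enter only when reducing Question~\ref{Q:1xx} further, not in Lemma~\ref{L:1} itself.
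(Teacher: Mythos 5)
Your proof is correct and follows essentially the same route as the paper's: truncation of a near-optimal infinite $\setB$-sequence for the forward direction (you use an $\varepsilon$-argument where the paper uses a factor of $2$), and extraction of a limiting sequence $\{B_i^*\}$ from the finite-horizon minimizers $\{B_i^{(n)}\}$ for the reverse direction (you spell out the diagonal/K\H{o}nig argument that the paper compresses into ``without loss of generality the limits $\lim_{k\to\infty}B_i^{(k)}$ exist''). No gaps.
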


\begin{proof}
Let us show that, assuming that~\eqref{E:defminf} is satisfied, there exists a constant $C>0$ for which inequality~\eqref{E:munbound} holds. Note first that~\eqref{E:defminf} implies the existence of a constant $C'>0$, so that
\begin{equation}\label{E:defminf-a}
\mu_{\infty}(\setA,\setB):= \adjustlimits\sup_{\{A_{i}\}\in\setA^{\infty}}\inf_{\{B_{i}\}\in \setB^{\infty}}\nu_{\infty}(\{A_{i}\},\{B_{i}\})\le C'.
\end{equation}
Let us take an arbitrary natural number $n$ and choose such sets of matrices $\{A_{i}\}\in\setA^{n}$ and $\{B_{i}\}\in\setB^{n}$ for which the equality
\[
\mu_{n}(\setA,\setB)=\nu_{n}(\{A_{i}\},\{B_{i}\})
\]
holds (this is possible because the sets $\setA$ and $\setB$ are finite, which implies that the corresponding minima and maxima are reached in~\eqref{E:defmnen}).

Further, we extend the set of matrices $\{A_{i}\}\in\setA^{n}$ arbitrarily to the right to the infinite sequence $\{\bar{A}_{i}\}\in\setA^{\infty}$ and choose an infinite sequence of matrices $\{\bar{B}_{i}\}\in\setB^{\infty}$ such that the inequality
\[
\nu_{\infty}(\{\bar{A}_{i}\},\{\bar{B}_{i}\})\le 2\inf_{\{B_{i}\}\in\setB^{\infty}}\nu_{\infty}(\{\bar{A}_{i}\},\{B_{i}\})
\]
is valid (this is possible due to the definition of the value $\nu_{\infty}(\cdot,\cdot)$). Then, setting $C=2C'$, we get by definition~\eqref{E:defminf-a}:
\[
\nu_{\infty}(\{\bar{A}_{i}\},\{\bar{B}_{i}\})\le 2 \mu_{\infty}(\setA,\setB)\le 2C'=C.
\]
Thus, since the estimate
\[
\nu_{n}(\{\bar{A}_{i}\}_{i=1}^{n},\{\bar{B}_{i}\}_{i=1}^{n})\le \nu_{\infty}(\{\bar{A}_{i}\},\{\bar{B}_{i}\})
\]
holds for any natural number $n$, we obtain that
\[
\nu_{n}(\{\bar{A}_{i}\}_{i=1}^{n},\{\bar{B}_{i}\}_{i=1}^{n})\le C,
\]
whence, by the definition of sets of matrices $\{A_{i}\}=\{\bar{A}_{i}\}_{i=1}^{n}\in\setA^{n}$ and $\{\bar{B}_{i}\}\in\setB^{\infty}$, we obtain:
\[
\mu_{n}(\setA,\setB)=\min_{\{B_{i}\}\in\setB^{n}}\nu_{n}(\{A_{i}\},\{B_{i}\})\le \nu_{n}(\{A_{i}\},\{\bar{B}_{i}\}_{i=1}^{n})= \nu_{n}(\{\bar{A}_{i}\}_{i=1}^{n},\{\bar{B}_{i}\}_{i=1}^{n})\le C.
\]
In one direction, the assertion of the lemma is proved.

Let us prove the assertion of the lemma in the opposite direction: Suppose that inequalities~\eqref{E:munbound} hold, and prove that in this case the inequality
\begin{equation}\label{E:defminf-b}
\mu_{\infty}(\setA,\setB):= \adjustlimits\sup_{\{A_{i}\}\in\setA^{\infty}}\inf_{\{B_{i}\}\in \setB^{\infty}}\nu_{\infty}(\{A_{i}\},\{B_{i}\})\le C
\end{equation}
also holds. To do this, choose an arbitrary sequence of matrices $\{A_{i}\}\in\setA^{\infty}$, and then for each $n=1,2,\ldots$ choose a set of matrices $\{B_{i}^{(n)}\}\in\setB^{n}$ that satisfy the inequalities
\[
\nu_{n}(\{A_{i}\}_{i=1}^{n},\{B_{i}^{(n)}\}_{i=1}^{n})\le \mu_{n}(\setA,\setB)\le C.
\]
Then, for each $k\ge n$, due to the definition of the quantities $\nu_{n}(\cdot,\cdot)$, the inequalities
\begin{equation}\label{E:nuikn}
\max_{1\le j\le n}\|A_{j}B_{j}^{(k)}\cdots A_{1}B_{1}^{(k)}\|=\nu_{n}(\{A_{i}\}_{i=1}^{n},\{B_{i}^{(k)}\}_{i=1}^{n})\le \mu_{n}(\setA,\setB)\le C,\qquad n=1,2,\ldots\,,
\end{equation}
will be valid. Further, since for each natural number $i$ the elements of the sequence  of matrices $\{B_{i}^{(k)}\}_{k=1}^{\infty}$ are taken from a finite set, then without loss of generality we can assume that there exist limits
\[
\lim_{k\to\infty}B_{i}^{(k)}=\bar{B}_{i}\in\setB,\qquad i=1,2,\ldots\,.
\]

Passing to the limit in~\eqref{E:nuikn} as $k\to\infty$, we obtain
\[
\max_{1\le j\le n}\|A_{j}\bar{B}_{j}\cdots A_{1}\bar{B}_{1}\|=\nu_{n}(\{A_{i}\}_{i=1}^{n},\{\bar{B}_{i}\}_{i=1}^{n})\le \mu_{n}(\setA,\setB)\le C,\qquad n=1,2,\ldots\,,
\]
or, equivalently,
\[
\sup_{n\ge 1}\|A_{n}\bar{B}_{n}\cdots A_{1}\bar{B}_{1}\|\le C.
\]
Therefore, for the chosen sequence of matrices $\{A_{i}\}\in\setA^{\infty}$ and the constructed sequence of matrices $\{\bar{B}_{i}\}\in\setB^{\infty}$, the relations
\[
\nu_{\infty}(\{A_{i}\},\{\bar{B}_{i}\})= \sup_{n\ge1}\nu_{n}(\{A_{i}\}_{i=1}^{n},\{\bar{B}_{i}\}_{i=1}^{n})=\sup_{n\ge 1}\|A_{n}\bar{B}_{n}\cdots A_{1}\bar{B}_{1}\|\le C
\]
hold. Since the sequence of matrices $\{A_{i}\}\in\setA^{\infty}$ is arbitrary, this implies inequality~\eqref{E:defminf-b}. The lemma is proved.
\end{proof}

\section{Right- and Left-Bounded Matrix Products}

When considering matrix products with alternating factors from two sets of matrices, instead of products of the form
\[
A_{n}B_{n}\cdots A_{1}B_{1},\quad\text{where}\quad A_{i}\in\setA,~B_{i}\in\setB,
\]
in which the factors $B_{i}$ are `to the right' of the corresponding factors $A_{i}$, one could consider matrix products of the form
\[
B_{n}A_{n}\cdots B_{1}A_{1},\quad\text{where}\quad A_{i}\in\setA,~B_{i}\in\setB,
\]
where the factors $B_{i}$ are `to the left' of the corresponding factors $A_{i}$. It is more or less clear that the difference between these two cases should have no bearing on the question of convergence or boundedness of the respective products. However, for the sake of completeness, we present more formalized statements and arguments below.

We say that a sequence of matrices $\{A_{n}\}\in\setA^{\infty}$ is \emph{$\setB$-left-bounded}, if there exists a sequence of matrices $\{B_{n}\}\in\setB^{\infty}$ for which the sequence of norms $\{\|B_{n}A_{n}\cdots B_{1}A_{1}\|\}_{n=1}^{\infty}$ is bounded, i.e., there exists a constant $\tilde{C}=\tilde{C}(\{A_{n}\})$ such that
\[
\exists~\{B_{n}\}\in\setB^{\infty}:\quad \|B_{n}A_{n}\cdots B_{1}A_{1}\|\le \tilde{C}, \quad n=1,2,\ldots\,.
\]

\begin{lemma}\label{L:ABeqBA}
Let the sets of matrices $\setA$ and $\setB$ be bounded. Then if every sequence of matrices $\{A_{n}\}\in\setA^{\infty}$ is $\setB$-left-bounded, then every sequence of matrices $\{A_{n}\}\in\setA^{\infty}$ is also $\setB$-right bounded. And if every sequence of matrices $\{A_{n}\}\in\setA^{\infty}$ is $\setB$-right-bounded, then every sequence of matrices $\{A_{n}\}\in\setA^{\infty}$ is also $\setB$-left-bounded.
\end{lemma}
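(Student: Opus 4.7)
The plan is to exploit a simple regrouping identity that converts a right-product into a left-product (and vice versa), up to one extra factor at each end. By associativity,
\[
A_{n} B_{n} A_{n-1} B_{n-1} \cdots A_{1} B_{1} \;=\; A_{n} \cdot (B_{n} A_{n-1})(B_{n-1} A_{n-2}) \cdots (B_{2} A_{1}) \cdot B_{1},
\]
and on setting $\tilde{A}_{k} := A_{k}$ and $\tilde{B}_{k} := B_{k+1}$ for $k = 1, \ldots, n-1$, the bracketed middle factor is exactly the left-product $\tilde{B}_{n-1} \tilde{A}_{n-1} \cdots \tilde{B}_{1} \tilde{A}_{1}$. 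The mirror identity,
\[
B_{n} A_{n} \cdots B_{1} A_{1} \;=\; B_{n} \cdot (A_{n} B_{n-1})(A_{n-1} B_{n-2}) \cdots (A_{2} B_{1}) \cdot A_{1},
\]
rewrites any left-product as $B_{n}$ times a right-product times $A_{1}$, under the analogous re-indexing $\hat{A}_{k} := A_{k+1}$ and $\hat{B}_{k} := B_{k}$.

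Using the boundedness of $\setA$ and $\setB$, I fix constants $\alpha,\beta$ with $\|A\|\le\alpha$ for $A\in\setA$ and $\|B\|\le\beta$ for $B\in\setB$. To prove the left-implies-right direction, I take an arbitrary $\{A_{n}\}\in\setA^{\infty}$ and apply the $\setB$-left-boundedness hypothesis directly to this same sequence, obtaining $\{\tilde{B}_{n}\}\in\setB^{\infty}$ and a constant $\tilde{C}$ with $\|\tilde{B}_{k} A_{k} \cdots \tilde{B}_{1} A_{1}\|\le\tilde{C}$ for every $k$. I then pick any $B_{1}\in\setB$ and set $B_{k+1} := \tilde{B}_{k}$ for $k\ge 1$, producing a genuine element $\{B_{n}\}\in\setB^{\infty}$. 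The first identity together with submultiplicativity of the norm then yields
\[
\|A_{n} B_{n} \cdots A_{1} B_{1}\| \;\le\; \alpha\,\tilde{C}\,\beta
\]
for every $n$, which is precisely $\setB$-right-boundedness of $\{A_{n}\}$. The right-implies-left direction is entirely symmetric, using the mirror identity to build a $\setB$-sequence for the left-product out of the one supplied by the right-boundedness hypothesis.

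The argument is essentially index bookkeeping, so I do not anticipate a genuine obstacle. The only point that deserves attention is that the construction shifts the $\setB$-sequence by one position and requires an arbitrary element of $\setB$ to fill the vacated slot at $B_{1}$ (respectively $A_{1}$); this is harmless, but it does tacitly use that $\setB$ (respectively $\setA$) is nonempty, which is already implicit in the hypothesis since otherwise no $\setB^{\infty}$-sequence would exist in the first place.
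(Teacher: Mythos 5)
Your proof is correct and uses essentially the same argument as the paper: regroup the alternating product so that a product of the opposite handedness of length $n-1$ appears sandwiched between a single factor from $\setA$ and a single factor from $\setB$, shift the index of one of the two sequences by one, and absorb the two end factors using the boundedness of $\setA$ and $\setB$. The paper phrases the shift on the $\{A_n\}$ side while you shift the $\{B_n\}$ side in the direction you write out in detail, but these are mirror images of the same bookkeeping.
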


\begin{proof}
Since by the condition of the lemma the sets of matrices $\setA$ and $\setB$ are bounded, there are constants $a$ and $b$ such that
\[
\sup\{\|A\|: A\in\setA\}\le a,\qquad \sup\{\|B\|: B\in\setB\}\le b.
\]
Let us first consider the case where any sequence of matrices with values in $\setA$ is $\setB$-right-bounded. We take an arbitrary sequence of matrices $\{A_{n}\}\in\setA^{\infty}$ and prove that in this case it is $\setB$-left-bounded.

By assumption, the `shifted' sequence of matrices $\{\tilde{A}_{n}\}\in\setA^{\infty}$, defined by the equalities
\begin{equation}\label{E:deftildeA}
\tilde{A}_{n}=A_{n+1}, \quad n=1,2,\ldots\,,
\end{equation}
is $\setB$-right-bounded. Then due to~\eqref{E:defABbounded} there is a sequence of matrices $\{B_{n}\}\in\setB^{\infty}$ and a constant $C=C(\{\tilde{A}_{n}\})$ such that
\[
\|\tilde{A}_{n}B_{n}\cdots \tilde{A}_{1}B_{1}\|\le C, \quad n=1,2,\ldots\,.
\]
From this and from the definition~\eqref{E:deftildeA}
\begin{multline*}
\|B_{n}A_{n}\cdots B_{1}A_{1}\| = \|B_{n}\left(A_{n}B_{n-1}\cdots A_{2}B_{1}\right)A_{1}\|= \|B_{n}\left(\tilde{A}_{n-1}B_{n-1}\cdots \tilde{A}_{1}B_{1}\right)A_{1}\|\le\\ \le \|B_{n}\| \|\tilde{A}_{n-1}B_{n-1}\cdots \tilde{A}_{1}B_{1}\| \|A_{1}\|\le bCa, \quad n=2,3,\ldots\,.
\end{multline*}
This proves the $\setB$-left-boundedness of the sequence $\{A_{n}\}\in\setA^{\infty}$.

The $\setB$-right-boundedness of any sequence of matrices $\{A_{n}\}\in\setA^{\infty}$ is proved in a similar way, provided that any sequence of matrices with values in $\setA$ is $\setB$-left-bounded.
\end{proof}

From Lemma~\ref{L:ABeqBA} it follows that when considering Question~\ref{Q:1}, instead of $\setB$-right-boundedness one can assume $\setB$-left-boundedness of the corresponding matrix products.

\section{Nondegenerate Matrices}\label{S:invertible}

In this section we will assume\footnote{Recall that the matrix sets $\setA$ and $\setB$ do not necessarily consist of square matrices, so it is natural to define the non-degeneracy of these sets by the products $AB$ or $BA$.} that
\begin{equation}\label{E:Cbound}
\det(AB)\neq 0,\qquad \forall~ A\in\setA,~B\in\setB.
\end{equation}

The matrix $AB$ has dimension $N\times N$. Moreover, its rank is not greater than the minimum of the ranks of the matrices $A$ and $B$, that is, not greater than $\min\{M,N\}$. Therefore, the relation~\eqref{E:Cbound} in which the matrix $B$ is applied to the matrix $A$ on the right-hand side can hold only if $N\le M$.

In the case where $N > M$, there may be valid the relation
\[
\det(BA)\neq 0,\qquad \forall~ A\in\setA,~B\in\setB,
\]
in which the matrix $B$ is applied to the matrix $A$ on the left-hand side. Accordingly, in this case, further reasoning in this section should be done with $\setB$-left-bounded matrix products.

Let us show that under condition~\eqref{E:Cbound} the answer to Question~\ref{Q:1} is affirmative:
\begin{theorem}\label{T1}
Let $\setA$ and $\setB$ be finite sets of matrices satisfying condition~\eqref{E:Cbound}, and let every sequence of matrices $\{A_{n}\}\in\setA^{\infty}$ be $\setB$-right-bounded. Then there exists a constant $C>0$ such that for any sequence of matrices $\{A_{n}\}\in\setA^{\infty}$ inequalities~\eqref{E:defABbounded} hold.
\end{theorem}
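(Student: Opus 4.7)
My plan is to construct a single constant $C$ that works for every $\{A_n\}\in\setA^{\infty}$, thereby yielding the conclusion of Theorem~\ref{T1}. For each $C>0$, consider the set
\[
\mathcal{G}_C := \bigl\{\{A_i\}\in\setA^{\infty} : \exists\,\{B_i\}\in\setB^{\infty}\ \text{with}\ \sup_{n\ge 1}\|A_nB_n\cdots A_1B_1\|\le C\bigr\}.
\]
The hypothesis of the theorem says exactly that $\setA^{\infty}=\bigcup_{n\in\mathbb{N}}\mathcal{G}_n$, so the goal is to show that a single $\mathcal{G}_{C_0}$ already equals $\setA^{\infty}$.

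Equip $\setA^{\infty}$ and $\setB^{\infty}$ with the product topologies induced by the discrete topology on the finite sets $\setA$ and $\setB$; both resulting spaces are then compact metrizable, and in particular $\setA^{\infty}$ is a Baire space. The first step is to show that each $\mathcal{G}_C$ is closed. For a convergent sequence $\{A_i^{(k)}\}\to\{A_i^{\ast}\}$ in $\mathcal{G}_C$ with witnesses $\{B_i^{(k)}\}\in\setB^{\infty}$, compactness of $\setB^{\infty}$ lets me pass to a subsequence along which $B_i^{(k)}\to B_i^{\ast}$ for every $i$; since convergence in a product of finite discrete spaces means coordinate-wise eventual equality, every fixed partial product eventually coincides with $A_n^{\ast}B_n^{\ast}\cdots A_1^{\ast}B_1^{\ast}$, and the inequality $\le C$ passes to the limit. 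Applying the Baire category theorem to the countable closed cover $\setA^{\infty}=\bigcup_{n\in\mathbb{N}}\mathcal{G}_n$, I obtain some $\mathcal{G}_{C_0}$ with non-empty interior, which, after shrinking to a basis element and filling in arbitrary values at unspecified coordinates below the maximum index, must contain an initial cylinder $\mathcal{U}=\{\{A_i\}:A_i=a_i,\ 1\le i\le N_0\}$ for some fixed prefix $(a_1,\ldots,a_{N_0})\in\setA^{N_0}$.

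The second step uses condition~\eqref{E:Cbound} to strip off this prefix. For an arbitrary $\{A_i'\}\in\setA^{\infty}$, the concatenated sequence $(a_1,\ldots,a_{N_0},A_1',A_2',\ldots)$ lies in $\mathcal{U}\subseteq\mathcal{G}_{C_0}$, so it admits a witness $\{\hat B_i\}\in\setB^{\infty}$. The partial product at time $N_0+k$ then factors as
\[
\bigl(A_k'\hat B_{N_0+k}\cdots A_1'\hat B_{N_0+1}\bigr)\,Q,\qquad Q := a_{N_0}\hat B_{N_0}\cdots a_1\hat B_1.
\]
By~\eqref{E:Cbound}, $Q$ is a product of $N_0$ invertible $N\times N$ matrices from the finite set $\{AB:A\in\setA,\ B\in\setB\}$, so setting $K:=\max_{A\in\setA,\,B\in\setB}\|(AB)^{-1}\|<\infty$ yields $\|Q^{-1}\|\le K^{N_0}$. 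Multiplying the factored product on the right by $Q^{-1}$ and taking norms gives $\|A_k'\hat B_{N_0+k}\cdots A_1'\hat B_{N_0+1}\|\le C_0K^{N_0}$ for every $k$, so the response $B_i:=\hat B_{N_0+i}$ witnesses $\{A_i'\}\in\mathcal{G}_{C_0K^{N_0}}$. Thus $C:=C_0K^{N_0}$ is the desired uniform constant.

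The main obstacle I anticipate is recognising Baire category as the appropriate tool; the argument ultimately rests on the closedness of $\mathcal{G}_C$, which is delicate because of the existential quantifier over $\setB^{\infty}$, and it is the compactness of $\setB^{\infty}$ (from finiteness of $\setB$) that saves it. The r\^ole of nondegeneracy~\eqref{E:Cbound} appears only at the end, but is essential: without invertibility of $Q$ there would be no way to transfer the cylinder-wise bound for a prepended sequence to a bound for the arbitrary sequence $\{A_i'\}$.
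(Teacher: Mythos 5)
Your proof is correct, and it takes a genuinely different route from the paper's. The paper argues by contradiction: after reducing (via a compactness argument on $\setB^{\infty}$, Lemma~\ref{L:1}) to the statement $\sup_{n}\mu_{n}(\setA,\setB)=\infty$, it explicitly concatenates ``bad blocks'' $\bar{A}_{n_{1}+\cdots+n_{m-1}+1},\ldots,\bar{A}_{n_{1}+\cdots+n_{m}}$, using the invertibility hypothesis~\eqref{E:Cbound} at every block junction to bound $\|(\bar{A}_{n_1}B_{n_1}\cdots\bar{A}_1B_1)^{-1}\|$ uniformly over the finitely many $B$-choices and thereby push the running maximum above $m$ for \emph{every} response $\{B_i\}$; the resulting sequence $\{\bar A_i\}$ is not $\setB$-right-bounded, a contradiction. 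You instead argue directly: the closedness of the sets $\mathcal{G}_C$ (which is where the compactness of $\setB^{\infty}$ enters, playing the same role as in the paper's Lemma~\ref{L:1}) plus the Baire category theorem produce a cylinder on which a single constant works, and~\eqref{E:Cbound} is used exactly once, to invert the fixed prefix $Q$ and transfer the bound to arbitrary tails. Both arguments are complete; yours is shorter and isolates the use of nondegeneracy in a single step, at the cost of invoking Baire category, while the paper's is more elementary and constructive in that it exhibits the offending sequence explicitly. One small point worth making explicit in your write-up: a basic open set in the product topology constrains finitely many, not necessarily initial, coordinates, so the passage to an initial cylinder $\mathcal{U}$ requires the (easy) observation that refining the constraint to all coordinates up to the largest constrained index only shrinks the set --- you do note this, and the step is sound.
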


\begin{proof}
Let us prove the required assertion by contradiction: Show that the absence of the required constant $C$ implies the existence of such a sequence of matrices $\{A_{n}\}\in\setA^{\infty}$, for which the sequence of norms $\{\|A_{n}B_{n}\cdots A_{1}B_{1}\|\}_{n=1}^{\infty}$ is unbounded for any choice of the sequence of matrices $\{B_{n}\}\in\setB^{\infty}$.

So suppose that the required constant $C$ does not exist, then inequalities~\eqref{E:munbound} are true for no $C$ and therefore
\begin{equation}\label{E:mnunbound}
\limsup_{n\to\infty}\mu_{n}(\setA,\setB) =\infty.
\end{equation}

Let us take an arbitrary positive number $\varkappa_{1}$, for example $\varkappa_{1}=1$. Then, due to~\eqref{E:mnunbound}, there is an integer $n_{1}\ge1$ such that
\begin{equation}\label{E:k1}
\mu_{n_{1}}(\setA,\setB)\ge\varkappa_{1}=1.
\end{equation}
Let us denote by $\bar{A}_{1},\ldots,\bar{A}_{n_{1}}\in\setA$ the set of matrices for which the right side in~\eqref{E:defmnen} for $n=n_{1}\ge1$ reaches the maximum over $\{A_{i}\}\in\setA^{n_{1}}$. Then, given~\eqref{E:k1} and the definition~\eqref{E:defmnen} of the quantities $\mu_{n}(\setA,\setB)$, we have
\[
\min_{\{B_{i}\}\in\setB^{n_{1}}} \nu_{n_{1}}(\{\bar{A}_{i}\}_{i=1}^{n_{1}},\{B_{i}\})= \mu_{n_{1}}(\setA,\setB)\ge\varkappa_{1}=1
\]
and therefore by the definition of $\nu_{n}(\cdot,\cdot)$ for any sequence of matrices $\{B_{i}\}\in\setB^{n_{1}}$ the following relations hold:
\begin{equation}\label{E:barAB}
\max_{1\le k\le n_{1}}\|\bar{A}_{k}B_{k}\cdots \bar{A}_{1}B_{1}\|=\nu_{n_{1}}(\{\bar{A}_{i}\}_{i=1}^{n_{1}},\{B_{i}\})= \mu_{n_{1}}(\setA,\setB)\ge\varkappa_{1}=1.
\end{equation}

Now note that due to the assumption that the sets $\setA$ and $\setB$ are finite, the set of matrices $\bar{A}_{n_{1}}B_{n_{1}}\cdots \bar{A}_{1}B_{1}$ in~\eqref{E:barAB} is also finite and all these matrices are invertible due to the assumption~\eqref{E:Cbound}. Then one can specify a constant $\eta_{1} > 0$ such that
\begin{equation}\label{E:barABinv}
\|(\bar{A}_{n_{1}}B_{n_{1}}\cdots \bar{A}_{1}B_{1})^{-1}\|\le\eta_{1},\qquad\forall~\{B_{i}\}\in\setB^{n_{1}}.
\end{equation}

Let us now choose an arbitrary number $\varkappa_{2}\ge 2\eta_{1}$. Then, again due to~\eqref{E:mnunbound}, there is an integer $n_{2}\ge 1$ such that 
\begin{equation}\label{E:k2}
\mu_{n_{2}}(\setA,\setB)\ge\varkappa_{2}.
\end{equation}
Let us denote by $\bar{A}_{n_{1}+1},\ldots,\bar{A}_{n_{1}+n_{2}}$ the set of matrices from $\setA$ for which the right-hand side in~\eqref{E:defmnen} for $n=n_{2}\ge1$ reaches its maximum over $\{A_{i}\}\in\setA^{n_{2}}$. Then, due to~\eqref{E:k2}
\[
\min_{\{B_{i}\}\in\setB^{n_{2}}} \nu_{n_{2}}(\{\bar{A}_{i}\}_{i=n_{1}+1}^{n_{2}},\{B_{i}\})= \mu_{n_{2}}(\setA,\setB)\ge\varkappa_{2}
\]
and therefore by the definition of $\nu_{n}(\cdot,\cdot)$ the relations
\begin{equation}\label{E:barAB2}
\max_{1\le k\le n_{2}}\|\bar{A}_{n_{1}+k}B_{n_{1}+k}\cdots \bar{A}_{n_{1}+1}B_{n_{1}+1}\|= \nu_{n_{2}}(\{\bar{A}_{i}\}_{i=n_{1}+1}^{n_{1}+n_{2}},\{B_{i}\})= \mu_{n_{2}}(\setA,\setB)\ge\varkappa_{2}
\end{equation}
hold for any sequence of matrices $\{B_{i}\}\in\setB^{n_{2}}$.

Now consider for $k=1,2,\ldots,n_{2}$ the product of the matrices
\[
W_{k}=\bar{A}_{n_{1}+k}B_{n_{1}+k}\cdots \bar{A}_{n_{1}+1}B_{n_{1}+1}\cdot \bar{A}_{n_{1}}B_{n_{1}}\cdots \bar{A}_{1}B_{1}
\]
and estimate its norm from below. To this end, we note that for any matrices $P$ and $Q$ of which the matrix $Q$ is invertible, the relations $\|P\|=\| PQ \cdot Q^{-1}\|\le \|PQ\|\cdot\|Q^{-1}\|$ hold, whence
\[
\|PQ\|\ge \|P\|\cdot \|Q^{-1}\|^{-1}.
\]
So for the matrix $W_{k}=P_{k}Q$, where
\[
P_{k}=\bar{A}_{n_{1}+k}B_{n_{1}+k}\cdots \bar{A}_{n_{1}+1}B_{n_{1}+1},\quad Q=\bar{A}_{n_{1}}B_{n_{1}}\cdots \bar{A}_{1}B_{1},
\]
we obtain for each $k=1,2,\ldots,n_{2}$ the inequalities
\begin{multline}\label{E:prodn2}
\|\bar{A}_{n_{1}+k}B_{n_{1}+k}\cdots \bar{A}_{n_{1}+1}B_{n_{1}+1}\cdot \bar{A}_{n_{1}}B_{n_{1}}\cdots \bar{A}_{1}B_{1}\|=\|W_{k}\|=\|P_{k}Q\|\ge \|P_{k}\|\cdot \|Q^{-1}\|^{-1}\ge\\ \ge \|\bar{A}_{n_{1}+k}B_{n_{1}+k}\cdots \bar{A}_{n_{1}+1}B_{n_{1}+1}\|\cdot \|(\bar{A}_{n_{1}}B_{n_{1}}\cdots \bar{A}_{1}B_{1})^{-1}\|^{-1}
\end{multline}
Because of~\eqref{E:barAB2}, here the first factor on the right-hand side satisfies the estimate
\[
\max_{1\le k\le n_{2}}\|\bar{A}_{n_{1}+k}B_{n_{1}+k}\cdots \bar{A}_{n_{1}+1}B_{n_{1}+1}\|\ge\varkappa_{2}\ge2\eta_{1},\qquad \forall~\{B_{n_{1}+i}\}\in\setB^{n_{2}},
\]
and for the second factor, due to~\eqref{E:barABinv}, the estimate
\[
\|(\bar{A}_{n_{1}}B_{n_{1}}\cdots \bar{A}_{1}B_{1})^{-1}\|^{-1}\ge\eta_{1}^{-1},\qquad \forall~\{B_{i}\}\in\setB^{n_{1}}
\]
takes place. Consequently, due to~\eqref{E:prodn2}
\[
\max_{1\le k\le n_{2}}\|\bar{A}_{n_{1}+n_{2}}B_{n_{1}+n_{2}}\cdots \bar{A}_{n_{1}+1}B_{n_{1}+1}\cdot \bar{A}_{n_{1}}B_{n_{1}}\cdots \bar{A}_{1}B_{1}\|\ge 2,\qquad \forall~\{B_{i}\}\in\setB^{n_{1}+n_{2}},
\]
and therefore, even more,
\[
\max_{1\le k\le n_{1}+n_{2}}\|\bar{A}_{k}B_{k}\cdots \bar{A}_{1}B_{1}\|\ge\max_{1\le k\le n_{2}}\|\bar{A}_{n_{1}+k}B_{n_{1}+k}\cdots \bar{A}_{n_{1}+1}B_{n_{1}+1}\cdot \bar{A}_{n_{1}}B_{n_{1}}\cdots \bar{A}_{1}B_{1}\|\ge 2
\]
for any sequence of matrices $\{B_{i}\}\in\setB^{n_{1}+n_{2}}$.

Similarly, we define a constant $\eta_{2} > 0$ to satisfy the inequality
\[
\|(\bar{A}_{n_{1}+n_{2}}B_{n_{1}+n_{2}}\cdots \bar{A}_{1}B_{1})^{-1}\|\le\eta_{2},\qquad\forall~\{B_{i}\}\in\setB^{n_{1}+n_{2}},
\]
and choose an arbitrary number $\varkappa_{3}\ge 3\eta_{2}$. Then, as in the previous case, one can specify such an integer $n_{3}\ge 1$ and a set of matrices $\bar{A}_{n_{1}+n_{2}+1},\ldots,\bar{A}_{n_{1}+n_{2}+n_{3}}$ for which
\[
\max_{1\le k\le n_{1}+n_{2}+n_{3}}\|\bar{A}_{k}B_{k}\cdots \bar{A}_{1}B_{1}\|\ge 3,\qquad \forall~\{B_{i}\}\in\setB^{n_{1}+n_{2}+n_{3}}.
\]

Acting similarly, for each $m=1,2,\ldots$ we can specify an integer $n_{m}\ge 1$ and have a set of matrices
\[
\bar{A}_{n_{1}+\cdots+n_{m-1}+1},\ldots,\bar{A}_{n_{1}+\cdots+n_{m-1}+n_{m}}
\]
such that 
\begin{equation}\label{E:prodnk}
\max_{1\le k\le n_{1}+\cdots+n_{m}}\|\bar{A}_{k}B_{k}\cdots \bar{A}_{1}B_{1}\|\ge m,\qquad \forall~\{B_{i}\}\in\setB^{n_{1}+\cdots+n_{m}}.
\end{equation}

The resulting estimate contradicts the assumption from Question~\ref{Q:1} that for every sequence of matrices $\{A_{n}\}\in\setA^{\infty}$ there is a sequence of matrices $\{B_{n}\}\in\setB^{\infty}$ for which the sequence of norms $\{\|A_{n}B_{n}\cdots A_{1}B_{1}\|,~n=1,2,\ldots\}$ is bounded. In fact, due to inequalities~\eqref{E:prodnk} for the constructed sequence of matrices $\{\bar{A}_{i}\}\in\setA^{\infty}$ and \textbf{any} sequence of matrices $\{B_{i}\}\in\setB^{\infty}$, the sequence of norms
\[
\{\|\bar{A}_{n}B_{n}\cdots \bar{A}_{1}B_{1}\|,~n=1,2,\ldots\}
\]
is not bounded. Theorem~\ref{T1} is proved.
\end{proof}

\section{Nonnegative Matrices With Nonzero Rows}\label{S:positive2}

The purpose of this section is to relax the restrictions imposed on the matrix sets $\setA$ and $\setB$ in Section~\ref{S:invertible}. As in Section~\ref{S:invertible}, the matrix sets $\setA$ and $\setB$ are assumed to be finite, but instead of nondegeneracy~\eqref{E:Cbound} of the matrix products $AB$, where $A\in\setA$, $B\in\setB$, nonnegativity of the corresponding matrices is assumed:
\begin{equation}\label{E:AB-nonneg}
a_{ij}\ge0,\quad b_{ij}\ge0,\qquad \forall~ A=(a_{ij})\in\setA,~B=(b_{ij})\in\setB,
\end{equation}
and it is also assumed that no matrix of $\setA$ and $\setB$ can have `zero' rows, i.e. rows consisting entirely of zero elements:
\begin{equation}\label{E:AB-nonzerostrings}
\min_{i}\sum_{j}a_{ij}>0,\quad\min_{i}\sum_{j}b_{ij}>0,\qquad \forall~ A=(a_{ij})\in\setA,~B=(b_{ij})\in\setB.
\end{equation}
Let us show that under these conditions the answer to Question~\ref{Q:1} is positive:

\begin{theorem}\label{T2}
Let $\setA$ and $\setB$ be finite sets of matrices satisfying~\eqref{E:AB-nonneg} and~\eqref{E:AB-nonzerostrings}, and let any sequence of matrices $\{A_{n}\}\in\setA^{\infty}$ be $\setB$-right-bounded. Then there is a constant $C > 0$ such that for any sequence of matrices $\{A_{n}\}\in\setA^{\infty}$ inequalities~\eqref{E:defABbounded} hold.
\end{theorem}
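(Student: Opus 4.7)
The plan is to run the same inductive contradiction argument used in the proof of Theorem~\ref{T1}, replacing the nondegeneracy-based lower bound $\|PQ\|\ge \|P\|\cdot\|Q^{-1}\|^{-1}$ by one that exploits nonnegativity and the absence of zero rows.

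The substitute rests on two elementary observations. First, the class of nonnegative matrices with no zero rows is closed under multiplication of compatible sizes: if $A=(a_{ij})$ and $B=(b_{ij})$ are nonnegative and both have no zero rows, then
\[
\sum_j(AB)_{ij}=\sum_k a_{ik}\Bigl(\sum_j b_{kj}\Bigr)>0
\]
for each $i$, so every product $A_kB_k\cdots A_1B_1$ with $A_i\in\setA$, $B_i\in\setB$ is nonnegative with no zero rows. Second, for the row-sum norm $\|M\|_\infty:=\max_i\sum_j|m_{ij}|$, if $P\ge 0$ and $Q\ge 0$ has no zero rows, then setting $q(Q):=\min_i\sum_j Q_{ij}>0$ one has
\[
\|PQ\|_\infty=\max_i\sum_j P_{ij}\sum_\ell Q_{j\ell}\ge q(Q)\,\|P\|_\infty.
\]
By equivalence of norms in finite dimension this gives $\|PQ\|\ge c(Q)\|P\|$ with $c(Q)>0$ in whichever fixed norm is being used in the problem.

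With these ingredients the block construction in the proof of Theorem~\ref{T1} transfers almost verbatim. Assuming by contradiction, via Lemma~\ref{L:1}, that $\mu_n(\setA,\setB)$ is unbounded, one builds $\{\bar A_i\}\in\setA^\infty$ in blocks. After fixing $\bar A_1,\ldots,\bar A_{N_{m-1}}$ with $N_{m-1}:=n_1+\cdots+n_{m-1}$, the matrix $Q=\bar A_{N_{m-1}}B_{N_{m-1}}\cdots\bar A_1 B_1$ ranges, as $\{B_i\}\in\setB^{N_{m-1}}$, over a finite family of nonnegative matrices with no zero rows, whence $q(Q)\ge\eta_{m-1}^{-1}$ for some $\eta_{m-1}>0$ uniformly in $\{B_i\}$. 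Choosing $\varkappa_m\ge m\,\eta_{m-1}$, and then selecting $n_m$ together with a next block $\bar A_{N_{m-1}+1},\ldots,\bar A_{N_m}$ so that the norm of the middle block exceeds $\varkappa_m$ uniformly in $\{B_i\}\in\setB^{n_m}$, the lower bound above forces $\max_{k\le N_m}\|\bar A_kB_k\cdots\bar A_1B_1\|\ge m$ for every $\{B_i\}\in\setB^{N_m}$, contradicting the $\setB$-right-boundedness of the constructed $\{\bar A_i\}$.

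The main obstacle is conceptual rather than technical: one has to recognize that nonnegativity together with the no-zero-rows hypothesis are exactly the properties needed to substitute for invertibility, since they are preserved under products of $\setA$- and $\setB$-matrices and they yield the requisite one-sided bound $\|PQ\|\ge c(Q)\|P\|$. Once this replacement is identified, the block induction of Theorem~\ref{T1} applies verbatim, with the quantity $q(Q)$ playing the role formerly played by $\|Q^{-1}\|^{-1}$.
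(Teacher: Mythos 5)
Your proposal is correct and follows essentially the same route as the paper: its key estimate $\bar A_{n_1}B_{n_1}\cdots\bar A_1B_1e\ge\omega_1e$, combined with the max-norm identity $\|M\|=\|Me\|$ for nonnegative $M$, is exactly your row-sum bound $\|PQ\|_\infty\ge q(Q)\|P\|_\infty$ in different notation, with $q(Q)=\min_i(Qe)_i$ playing the role of $\omega_1$. The block construction, the uniform lower bound over the finitely many partial products $Q$, and the substitution of this bound for $\|Q^{-1}\|^{-1}$ from Theorem~\ref{T1} all match the paper's argument.
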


Before proceeding with the proof of Theorem~\ref{T2}, we make a few remarks and note a number of auxiliary facts.

Note that conditions~\eqref{E:AB-nonneg} and~\eqref{E:AB-nonzerostrings} are satisfied by the sets of matrices $\setA$ and $\setB$, each of which has strictly positive elements, i.e.,
\begin{equation}\label{E: AB -positive}
\min_{i,j}a_{ij} > 0,\quad\min_{i,j}b_{ji} > 0,\qquad \forall~ A=(a_{ij})\in\setA,~B=(b_{ij})\in\setB.
\end{equation}

Note also that under condition~\eqref{E:AB-nonzerostrings}, since the sets $\setA$ and $\setB$ are finite, there exists $\gamma>0$ that
\[
\min_{i}\sum_{j}a_{ij}\ge\gamma,\quad\min_{i}\sum_{j}b_{ij}\ge\gamma,\qquad \forall~ A=(a_{ij})\in\setA,~B=(b_{ij})\in\setB.
\]
If we introduce a finite set of matrices 
\[
\setF=\{AB\colon A\in\setA,~B\in\setB\}\subset\setM(N,N),
\]
then we can assume without loss of generality that under conditions~\eqref{E:AB-nonneg} and~\eqref{E:AB-nonzerostrings} the following inequality holds: \begin{equation}\label{E:nonzerostrings}
\min_{i}\sum_{j}f_{ij}\ge\gamma,\qquad \forall~ F=(f_{ij})\in\setF.
\end{equation}

Finally, note that if condition~\eqref{E:AB-nonneg} is satisfied, condition~\eqref{E:nonzerostrings} follows from condition~\eqref{E:AB-nonzerostrings} but is not equivalent to it.

By $e$ we denote the vector in $\mathbb{R}^{N}$ (or $\mathbb{R}^{M}$) consisting of the unit elements\footnote{The fact that the same notation applies to elements from different spaces will hopefully not cause any confusion!}:
\[
e=\{1,1,\ldots,1\}.
\]
Then the following equivalent description of conditions~\eqref{E:AB-nonzerostrings} and~\eqref{E:nonzerostrings} is true.
\begin{lemma}\label{L:e-boundedness}
Condition~\eqref{E:AB-nonzerostrings} is true if and only if there exists $\gamma > 0$ such that
\[
Ae\ge\gamma e,\quad Be\ge\gamma e,\qquad \forall~ A\in\setA,~B\in\setB,
\]
where the inequalities between vectors are understood coordinate-wise.

If condition~\eqref{E:AB-nonneg} is satisfied, then condition~\eqref{E:nonzerostrings} is satisfied if and only if there is $\gamma>0$ such that
\[
Fe\ge\gamma e,\qquad \forall~ F\in\setF,
\]
where the inequalities between vectors are understood coordinate-wise.
\end{lemma}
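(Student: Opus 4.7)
The plan is to reduce both equivalences to a single elementary observation: for any nonnegative matrix $M=(m_{ij})$ with $q$ columns, the product $Me$ (with $e\in\mathbb{R}^{q}$ the all-ones vector) has $i$-th coordinate equal to the $i$-th row sum $\sum_{j}m_{ij}$. Once this identity is in place, both parts of the lemma unravel essentially by definition.

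For the first equivalence I would apply the identity with $M=A\in\setA$ and with $M=B\in\setB$, using the appropriate $e\in\mathbb{R}^{M}$ or $e\in\mathbb{R}^{N}$. The coordinate-wise inequality $Ae\ge\gamma e$ is then literally the same as $\sum_{j}a_{ij}\ge\gamma$ for every $i$, and similarly for $Be\ge\gamma e$. In one direction, the existence of such a $\gamma>0$ thus yields~\eqref{E:AB-nonzerostrings} at once. In the other direction, one extracts the uniform constant from the finiteness of $\setA$ and $\setB$ by setting $\gamma:=\min\{\min_{A\in\setA,i}\sum_{j}a_{ij},\ \min_{B\in\setB,i}\sum_{j}b_{ij}\}>0$, which is precisely the observation made in the text immediately preceding the lemma.

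For the second equivalence, under the nonnegativity hypothesis~\eqref{E:AB-nonneg} every matrix $F=AB\in\setF$ again has nonnegative entries, so the same identity $(Fe)_{i}=\sum_{j}f_{ij}$ applies, and a word-for-word repetition of the previous argument yields the equivalence between~\eqref{E:nonzerostrings} and the existence of $\gamma>0$ with $Fe\ge\gamma e$ for all $F\in\setF$. There is essentially no real obstacle in this proof; the only mildly subtle point is keeping track of the fact that the symbol $e$ stands for all-ones vectors of possibly different dimensions depending on which matrix is acting on it, a convention the paper already flags in its footnote.
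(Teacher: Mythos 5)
Your proposal is correct and is exactly the argument the paper has in mind: the paper's own proof of this lemma is the single word ``Evident,'' and what you have written out --- the identity $(Me)_{i}=\sum_{j}m_{ij}$ together with extracting a uniform $\gamma>0$ from the finiteness of $\setA$, $\setB$ (and hence $\setF$) --- is precisely the evident argument being elided. No gaps.
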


\begin{proof}
Evident.
\end{proof}

Note a general fact in connection with nonnegative matrices. By the end of this section, let $\|\cdot\|$ be the so-called max-norm in $\mathbb{R}^{N}$, that is, the norm given by the equality
\[
\|x\|=\max_{i} |x_{i}|,\qquad x=\{x_{1},x_{2},\ldots,x_{N}\}.
\]
\begin{lemma}\label{L:maxnorm}
If $A\in\setM(N,N)$ is a matrix with nonnegative elements, then $\|A\|=\|Ae\|$, where $e=\{1,1,\ldots,1\}$.
\end{lemma}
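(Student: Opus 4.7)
The statement identifies the operator norm of a nonnegative matrix (with respect to the $\ell^\infty$/max-norm) with the max-row-sum, which is $\|Ae\|$. The plan is to verify the two matching inequalities $\|Ae\|\le\|A\|$ and $\|A\|\le\|Ae\|$, both of which are essentially one-line computations exploiting that $e$ is the ``worst-case'' unit vector in the max-norm when the matrix entries are nonnegative.

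First I would note that the operator norm induced by the max-norm is, by definition, $\|A\|=\sup_{\|x\|\le 1}\|Ax\|$. Since $\|e\|=1$, the inequality $\|Ae\|\le\|A\|$ is immediate from this definition. For the reverse inequality, take any $x\in\mathbb{R}^{N}$ with $\|x\|\le1$, so that $|x_{j}|\le1$ for every $j$. Then, using nonnegativity of the entries of $A=(a_{ij})$,
\[
|(Ax)_{i}|=\Bigl|\sum_{j}a_{ij}x_{j}\Bigr|\le\sum_{j}a_{ij}|x_{j}|\le\sum_{j}a_{ij}=(Ae)_{i},
\]
so that $\|Ax\|=\max_{i}|(Ax)_{i}|\le\max_{i}(Ae)_{i}=\|Ae\|$. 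Taking the supremum over $x$ with $\|x\|\le1$ yields $\|A\|\le\|Ae\|$, and the two inequalities combine to give the claim.

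There is no real obstacle here: the argument is just the standard computation of the $\ell^{\infty}$-operator norm as the maximum row sum, with the simplification that nonnegativity of the entries allows us to drop the absolute values and realize this maximum by testing against the single vector $e$. The only point to state carefully is which norm $\|\cdot\|$ means on each side of the identity (the max-norm on $\mathbb{R}^{N}$ for $\|Ae\|$, and the induced operator norm for $\|A\|$), exactly as fixed in the paragraph preceding the lemma.
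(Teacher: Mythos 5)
Your argument is correct and is exactly the standard computation the paper has in mind when it dismisses the proof as ``Evident'': the max-norm operator norm is the maximum row sum, which nonnegativity lets you realize by testing against $e$. Nothing to add.
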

\begin{proof}
Evident.
\end{proof}

According to Lemma~\ref{L:maxnorm}, for any matrix $A$ with nonnegative entries, the equality $\|A\|=\|Ae\|$ holds. Therefore, in the following, when estimating the norm of the matrix products under consideration, it will suffice to estimate the norms of the matrices on element $e$.

Now we can proceed with the proof of Theorem~\ref{T2}.

\begin{proof}[Proof of Theorem~\ref{T2}]
As in the case of Theorem~\ref{T1}, we perform the proof by contradiction: We will show that the absence of the required constant $C$ implies the existence of such a sequence of matrices $\{A_{n}\}\in\setA^{\infty}$, for which the sequence of norms $\{\|A_{n}B_{n}\cdots A_{1}B_{1}\|\}_{n=1}^{\infty}$ is unbounded for any choice of the sequence of matrices $\{B_{n}\}\in\setB^{\infty}$.

So, suppose that the required constant $C$ does not exist, then inequalities~\eqref{E:Cbound} are not true for any $C$, that is,
\begin{equation}\label{E:mnunbound3}
\limsup_{n\to\infty}\mu_{n}(\setA,\setB) =\infty.
\end{equation}

Let us take an arbitrary positive number $\varkappa_{1}$, for example $\varkappa_{1}=1$. Then, due to~\eqref{E:mnunbound3}, there is an integer $n_{1}\ge1$ such that
\begin{equation}\label{E:k123}
\mu_{n_{1}}(\setA,\setB)\ge\varkappa_{1}=1.
\end{equation}
Let $\bar{A}_{1},\ldots,\bar{A}_{n_{1}}$ denote the set of matrices from $\setA$ for which the right-hand side in~\eqref{E:defmnen} for $n=n_{1}\ge1$ reaches the maximum over $\{A_{i}\}\in\setA^{n_{1}}$. Then, due to~\eqref{E:k123} and the definition~\eqref{E:defmnen} of the quantities $\mu_{n}(\setA,\setB)$,
\[
\min_{\{B_{i}\}\in\setB^{n_{1}}} \nu_{n_{1}}(\{\bar{A}_{i}\}_{i=1}^{n_{1}},\{B_{i}\})= \mu_{n_{1}}(\setA,\setB)\ge\varkappa_{1}=1
\]
and thus by the definition of $\nu_{n}(\cdot,\cdot)$ the relations
\[
\max_{1\le k\le n_{1}}\|\bar{A}_{k}B_{k}\cdots \bar{A}_{1}B_{1}\|=\nu_{n_{1}}(\{\bar{A}_{i}\}_{i=1}^{n_{1}},\{B_{i}\})= \mu_{n_{1}}(\setA,\setB)\ge\varkappa_{1}=1
\]
hold for any sequence of matrices $\{B_{i}\}\in\setB^{n_{1}}$.

Now note that because of the assumptions~\eqref{E:AB-nonzerostrings} or~\eqref{E:nonzerostrings} for any sequences of matrices $\{B_{i}\}\in\setB^{n_{1}}$ each vector $\bar{A}_{n_{1}}B_{n_{1}}\cdots \bar{A}_{1}B_{1}e$ does not vanish and, moreover, is coordinate-bounded from below by a positive factor of the vector $e$. Since the set of matrices $\setB$ is finite, it follows that there exists a constant $\omega_{1} > 0$ for which
\begin{equation}\label{E:mainlowbound3}
\bar{A}_{n_{1}}B_{n_{1}}\cdots \bar{A}_{1}B_{1}e\ge\omega_{1}e,\qquad \forall~ \{B_{i}\}\in\setB^{n_{1}}.
\end{equation}

Let us now choose an arbitrary number $\varkappa_{2}\ge 2\omega_{1}^{-1}$. Then, again due to~\eqref{E:mnunbound3}, there is an integer $n_{2}\ge 1$ such that
\begin{equation}\label{E:k2pos3}
\mu_{n_{2}}(\setA,\setB)\ge\varkappa_{2}.
\end{equation}
Let $\bar{A}_{n_{1}+1},\ldots,\bar{A}_{n_{1}+n_{2}}$ denote the set of matrices from $\setA$ for which the right-hand side in~\eqref{E:defmnen} for $n=n_{2}\ge1$ reaches its maximum over $\{A_{i}\}\in\setA^{n_{2}}$. Then, due to~\eqref{E:k2pos3}
\[
\min_{\{B_{i}\}\in\setB^{n_{2}}} \nu_{n_{2}}(\{\bar{A}_{i}\}_{i=n_{1}+1}^{n_{2}},\{B_{i}\})= \mu_{n_{2}}(\setA,\setB)\ge\varkappa_{2}
\]
and therefore by the definition of $\nu_{n}(\cdot,\cdot)$ for any sequence of matrices $\{B_{i}\}\in\setB^{n_{2}}$ the following relations hold:
\[
\max_{1\le k\le n_{2}}\|\bar{A}_{n_{1}+k}B_{n_{1}+k}\cdots \bar{A}_{n_{1}+1}B_{n_{1}+1}\|= \nu_{n_{2}}(\{\bar{A}_{i}\}_{i=n_{1}+1}^{n_{1}+n_{2}},\{B_{i}\})= \mu_{n_{2}}(\setA,\setB)\ge\varkappa_{2},
\]
which by Lemma~\ref{L:maxnorm} are equivalent to the inequalities
\begin{equation}\label{E:barAB2pos3}
\max_{1\le k\le n_{2}}\|\bar{A}_{n_{1}+k}B_{n_{1}+k}\cdots \bar{A}_{n_{1}+1}B_{n_{1}+1}e\|\ge\varkappa_{2}.
\end{equation}

Now consider for $k=1,2,\ldots,n_{2}$ the product of the matrices
\[
W_{k}=\bar{A}_{n_{1}+k}B_{n_{1}+k}\cdots \bar{A}_{n_{1}+1}B_{n_{1}+1}\cdot \bar{A}_{n_{1}}B_{n_{1}}\cdots \bar{A}_{1}B_{1}
\]
and estimate its norm $\|W_{k}\|$ from below. For this, due to Lemma~\ref{L:maxnorm}, it suffices to estimate the norm of the vector $\|W_{k}e\|=\|W_{k}\|$ from below.

We set
\[
P_{k}=\bar{A}_{n_{1}+k}B_{n_{1}+k}\cdots \bar{A}_{n_{1}+1}B_{n_{1}+1},\quad Q=\bar{A}_{n_{1}}B_{n_{1}}\cdots \bar{A}_{1}B_{1},
\]
Now note that in the representation $W_{k}=P_{k}Qe$, the lower bound holds for the vector $Qe$ due to~\eqref{E:mainlowbound3}:
\[
Qe\ge \omega_{1} e,
\]
which, since the matrices $P_{k}$ are positive, for each $k=1,2,\ldots,n_{2}$ the relations
\[
\bar{A}_{n_{1}+k}B_{n_{1}+k}\cdots \bar{A}_{n_{1}+1}B_{n_{1}+1}\cdot \bar{A}_{n_{1}}B_{n_{1}}\cdots \bar{A}_{1}B_{1}e=W_{k}e=P_{k}Qe\ge P_{k}\left(\omega_{1}e\right) = \omega_{1} P_{k}e
\]
From this and from Lemma~\ref{L:maxnorm}
\[
\|\bar{A}_{n_{1}+k}B_{n_{1}+k}\cdots \bar{A}_{n_{1}+1}B_{n_{1}+1}\cdot \bar{A}_{n_{1}}B_{n_{1}}\cdots \bar{A}_{1}B_{1}\|\ge \omega_{1} \|P_{k}e\|,
\]
and then, due to inequalities~\eqref{E:barAB2pos3},
\[
\max_{1\le k\le n_{2}}\|\bar{A}_{n_{1}+k}B_{n_{1}+k}\cdots \bar{A}_{n_{1}+1}B_{n_{1}+1}\cdot \bar{A}_{n_{1}}B_{n_{1}}\cdots \bar{A}_{1}B_{1}\| \ge \omega_{1} \varkappa_{2}\ge 2,\qquad\forall~\{B_{i}\}\in\setB^{n_{1}+n_{2}},
\]
and hence, more, for any sequence of matrices $\{B_{i}\}\in\setB^{n_{1}+n_{2}}$ we have the estimates
\[
\max_{1\le k\le n_{1}+n_{2}}\|\bar{A}_{k}B_{k}\cdots \bar{A}_{1}B_{1}\|\ge\max_{1\le k\le n_{2}}\|\bar{A}_{n_{1}+k}B_{n_{1}+k}\cdots \bar{A}_{n_{1}+1}B_{n_{1}+1}\cdot \bar{A}_{n_{1}}B_{n_{1}}\cdots \bar{A}_{1}B_{1}\|\ge 2.
\]

Similarly, we define the constant $\omega_{2}>0$ to satisfy the inequality
\[
\bar{A}_{n_{1}+n_{2}}B_{n_{1}+n_{2}}\cdots \bar{A}_{1}B_{1}e\ge\omega_{2}e,\qquad\forall~\{B_{i}\}\in\setB^{n_{1}+n_{2}},
\]
and now choose an arbitrary number $\varkappa_{3}\ge 3\omega_{2}^{-1}$.

Then, as in the previous case, one can specify such an integer $n_{3}\ge 1$ and a set of matrices $\bar{A}_{n_{1}+n_{2}+1},\ldots,\bar{A}_{n_{1}+n_{2}+n_{3}}$ for which the inequality
\[
\max_{1\le k\le n_{3}}\|\bar{A}_{n_{1}+n_{2}+k}B_{n_{1}+n_{2}+k}\cdots \bar{A}_{n_{1}+n_{2}+1}B_{n_{1}+n_{2}+1}e\|\ge\varkappa_{2},\qquad\forall~ \{B_{n_{1}+n_{2}+i}\}\in\setB^{n_{3}},
\]
holds, and so does the inequality
\[
\max_{1\le k\le n_{1}+n_{2}+n_{3}}\|\bar{A}_{k}B_{k}\cdots \bar{A}_{1}B_{1}\|\ge 3,\qquad \forall~\{B_{i}\}\in\setB^{n_{1}+n_{2}+n_{3}}.
\]

Acting similarly, for each $m=1,2,\ldots$ we can specify an integer $n_{m}\ge 1$ and have a set of matrices
\[
\bar{A}_{n_{1}+\cdots+n_{m-1}+1},\ldots,\bar{A}_{n_{1}+\cdots+n_{m-1}+n_{m}},
\]
for which
\begin{equation}\label{E:prodnk-pos3}
\max_{1\le k\le n_{1}+\cdots+n_{m}}\|\bar{A}_{k}B_{k}\cdots \bar{A}_{1}B_{1}\|\ge m,\qquad \forall~\{B_{i}\}\in\setB^{n_{1}+\cdots+n_{m}}.
\end{equation}

The resulting estimate contradicts the assumption from Question~\ref{Q:1} that for every sequence of matrices $\{A_{n}\}\in\setA^{\infty}$ there is a sequence of matrices $\{B_{n}\}\in\setB^{\infty}$ for which the sequence of norms $\{\|A_{n}B_{n}\cdots A_{1}B_{1}\|,~n=1,2,\ldots\}$ is bounded. Indeed, due to inequalities~\eqref{E:prodnk-pos3} for the constructed sequence of matrices $\{\bar{A}_{i}\}\in\setA^{\infty}$ and \textbf{any} sequence of matrices $\{B_{i}\}\in\setB^{\infty}$ the sequence of norms $\{\|\bar{A}_{n}B_{n}\cdots \bar{A}_{1}B_{1}\|\}_{n=1}^{\infty}$ is unbounded. Theorem~\ref{T2} is proved.
\end{proof}

In~\cite[Thm.~4]{Koz:DDNS18} the following statement was made:
\begin{proposition}\label{St:3}
Let the sets of matrices $\setA$ and $\setB$ be such that for every sequence of matrices $\{A_{n}\}\in\setA^{\infty}$ there exists a natural $k$ such that for a set of matrices $B_{1},\ldots,B_{k}\in\setB$ the inequality $\|A_{k}B_{k}\cdots A_{1}B_{1}\|< 1$ holds. Then there are constants $C > 0$ and $\lambda\in(0,1)$ such that for every sequence of matrices $\{A_{n}\}\in\setA^{\infty}$ there is a sequence of matrices $\{B_{n}\}\in\setB^{\infty}$, for which $\|A_{n}B_{n}\cdots A_{1}B_{1}\|\le C\lambda^{n}$ for all $n=1,2,\ldots\,$.
\end{proposition}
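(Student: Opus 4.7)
The plan is to proceed in two stages: first prove a uniform finite-horizon block lemma by a König-type compactness argument, and then iterate it in blocks to get exponential decay. Throughout I assume, as seems to be implicit in the context, that $\setA$ and $\setB$ are finite.

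For the first stage, call a finite tuple $(A_{1},\ldots,A_{k})\in\setA^{k}$ \emph{good} if there exist $B_{1},\ldots,B_{k}\in\setB$ with $\|A_{k}B_{k}\cdots A_{1}B_{1}\|<1$. Let $T$ be the set of tuples $(A_{1},\ldots,A_{k})\in\setA^{k}$ (over all $k$) every prefix of which, including itself, is not good. Then $T$ is prefix-closed, and since $\setA$ is finite, the associated rooted tree is finitely branching. An infinite branch of $T$ would yield a sequence $\{A_{n}\}\in\setA^{\infty}$ no prefix of which is good, contradicting the hypothesis. By König's lemma, $T$ has bounded depth, say $K-1$. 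Hence every tuple $(A_{1},\ldots,A_{K})\in\setA^{K}$ has some prefix $(A_{1},\ldots,A_{m})$ with $m\le K$ which is good. Since $\setA^{K}$ is finite, taking the maximum of the witness norms over all such tuples produces a uniform $\lambda_{0}\in(0,1)$: for every $(A_{1},\ldots,A_{K})\in\setA^{K}$ there exist $m\le K$ and $B_{1},\ldots,B_{m}\in\setB$ with
\[
\|A_{m}B_{m}\cdots A_{1}B_{1}\|\le\lambda_{0}.
\]

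The second stage is a block iteration. Given an arbitrary $\{A_{n}\}\in\setA^{\infty}$, apply the block lemma to $(A_{1},\ldots,A_{K})$ to obtain $m_{1}\le K$ and $B_{1},\ldots,B_{m_{1}}$; then to the shifted tuple $(A_{m_{1}+1},\ldots,A_{m_{1}+K})$ to obtain $m_{2}\le K$ and $B_{m_{1}+1},\ldots,B_{m_{1}+m_{2}}$; and so on. Setting $M_{r}=m_{1}+\cdots+m_{r}$, submultiplicativity of the norm yields $\|A_{M_{r}}B_{M_{r}}\cdots A_{1}B_{1}\|\le\lambda_{0}^{r}$. For arbitrary $n$, pick $r$ with $M_{r}\le n<M_{r+1}$ and let $a:=\max\{\|AB\|:A\in\setA,\,B\in\setB\}$ (which is finite), so that
\[
\|A_{n}B_{n}\cdots A_{1}B_{1}\|\le a^{n-M_{r}}\lambda_{0}^{r}\le a^{K}\lambda_{0}^{r}.
\]
Since $n<M_{r+1}\le(r+1)K$, we have $r\ge n/K-1$, and hence $\|A_{n}B_{n}\cdots A_{1}B_{1}\|\le C\lambda^{n}$ with $\lambda=\lambda_{0}^{1/K}\in(0,1)$ and $C=a^{K}/\lambda_{0}$, as desired.

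The main obstacle is the first stage, the König's-lemma reduction. The naive attempt---forming a tree out of tuples $(A_{1},\ldots,A_{k})$ for which \emph{no} choice of $(B_{1},\ldots,B_{k})$ brings the product below $1$---breaks down because this ``badness'' is not prefix-monotone: a tuple can be bad while a shorter prefix of it is good, so the putative tree is not prefix-closed. The remedy is to encode the negation of the hypothesis uniformly by requiring every prefix, and not only the tuple itself, to be bad; this yields a genuinely prefix-closed subtree of $\bigcup_{k}\setA^{k}$ whose infinite branches correspond precisely to counterexamples to the hypothesis. Finiteness of $\setA$ and $\setB$ is used both to apply König's lemma and to replace the strict inequality $<1$ by a uniform contraction factor $\lambda_{0}<1$ via a finite max.
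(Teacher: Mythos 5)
The paper itself gives no proof of Proposition~\ref{St:3}: it is quoted verbatim from \cite[Thm.~4]{Koz:DDNS18}, so there is nothing internal to compare against. Your argument is correct and self-contained under the finiteness assumption on $\setA$ and $\setB$ (which is indeed the standing hypothesis everywhere else in the paper), and it follows the standard route one would expect: a compactness step producing a uniform horizon $K$ and a uniform contraction factor $\lambda_{0}<1$, followed by block concatenation; your K\"onig's-lemma tree of tuples \emph{all of whose prefixes are bad} is exactly the right way to make the compactness step prefix-monotone, and is the combinatorial counterpart of the diagonal-extraction argument used in the cited reference. Two cosmetic repairs: in the final estimate $a^{\,n-M_{r}}\le a^{K}$ you should replace $a$ by $\max\{1,a\}$ (the inequality fails if $a<1$), and $\lambda_{0}$ should be taken as $\max$ of the witness norms and, say, $\tfrac12$, to guarantee $\lambda_{0}>0$ so that $\lambda=\lambda_{0}^{1/K}\in(0,1)$. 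Neither affects the substance.
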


It follows from this statement that if the matrix sets $\setA$ and $\setB$ satisfy the conditions of Proposition~\ref{St:3}, the answer to Question~\ref{Q:1} is also positive. Therefore, when analyzing the boundedness of matrix products with factors from two sets of matrices, it suffices to restrict to the case where the conditions of Proposition~\ref{St:3} are not satisfied for the sets of matrices $\setA$ and $\setB$, i.e., \emph{in the set of matrices $\setA$ there is a sequence of matrices $\{A_{n}\}$ such that the inequality $\|A_{k}B_{k}\cdots A_{1}B_{1}\|\ge1$ holds for any set of matrices $B_{1},\ldots,B_{k}\in\setB$.}

\section{Pointwise Setting}\label{S:pointwise}

It is known~\cite{Koz:AiT90:10:e, DaubLag:LAA92, BerWang:LAA92, Gurv:LAA95, Hartfiel:02} that Proposition~\ref{St:1} on boundedness of matrix products remains valid if one replaces in it the assumption that every sequence of norms $\{\|A_{n}\cdots A_{1}\|\}_{n=1}^{\infty}$ is bounded, by the assumption that the sequence of norms $\{\|A_{n}\cdots A_{1}x\|\}_{n=1}^{\infty}$ is `pointwise bounded', that is, it is bounded for every vector $x\in\mathbb{R}^{N}$ and every sequence of matrices $\{A_{n}\}\in\setA^{\infty}$.

In this context, the question arises of the validity of a similar extension for matrix products with alternating factors. We say that a sequence of matrices $\{A_{n}\}\in\setA^{\infty}$ is \emph{pointwise $\setB$-right-bounded}, if for every vector $x\in\mathbb{R}^{N}$ and every sequence of matrices $\{A_{n}\}\in\setA^{\infty}$ there are such a constant $C=C(x,\{A_{n}\})$ and a sequence of matrices $\{B_{n}\}\in\setB^{\infty}$ for which the following inequality holds:
\[
\|A_{n}B_{n}\cdots A_{1}B_{1}x\|\le C\|x\|,\qquad n=1,2,\ldots\,.
\]

\begin{problem}\label{Q:2}
Let $\setA$ and $\setB$ be finite sets of matrices such that every sequence of matrices $\{A_{n}\}\in\setA^{\infty}$ is pointwise $\setB$ right-bounded. In this case, is there a universal constant $C > 0$ such that for every vector $x\in\mathbb{R}^{N}$ and every sequence of matrices $\{A_{n}\}\in\setA^{\infty}$ there is a sequence of matrices $\{B_{n}\}\in\setB^{\infty}$, for which for all $n=1,2,\ldots$ the inequalities $\|A_{n}B_{n}\cdots A_{1}B_{1}x\|\le C\|x\|$ hold? In other words, is it true that if every $\{A_{n}\}\in\setA^{\infty}$ is pointwise $\setB$-right-bounded, then every sequence of matrices $\{A_{n}\}\in\setA^{\infty}$ is $\setB$-right-bounded?
\end{problem}

Let us show that in the general case the answer to Question~\ref{Q:2} is negative.

\begin{example}\label{Ex:Stan}
In~\cite[Example 2]{Stanford:SIAMJCO79} D.P.\,Stanford constructed an example of a set $\setH$ consisting of two matrices
\[
H_{1}=\begin{bmatrix} \frac{1}{2}& 0\\ 0& 2\vphantom{\frac{1}{2}}
\end{bmatrix},\quad
H_{2}=\begin{bmatrix} \frac{\sqrt{3}}{2}& \frac{1}{2}\\ -\frac{1}{2}& \frac{\sqrt{3}}{2}
\end{bmatrix},
\]
with the property that for any sequence of matrices $\{A_{n}\}\in\setH^{\infty}$ the following inequalities hold:
\begin{equation}\label{E:AnA1}
\|A_{n}\cdots A_{1}\|\ge 1,\qquad n=1,2,\ldots\,,
\end{equation}
and for each vector $x\in\mathbb{R}^{2}$ there is a sequence of matrices $\{\bar{A}_{n}\}\in\setH^{\infty}$ such that
\begin{equation}\label{E:AnA1x}
\bar{A}_{n}\cdots \bar{A}_{1}x\to 0\quad\text{for}\quad n\to\infty.
\end{equation}

Relation~\eqref{E:AnA1} follows from the inequality
\begin{equation}\label{E:pho-det}
\|A\|\ge\rho(A)\ge\left|\det A\right|^{1/N},
\end{equation} 
valid for any $(N\times N)$-matrix $A$, where $\rho(\cdot)$ is the spectral radius of a matrix.

For matrices $H_{1}$ and $H_{2}$, the equalities $\det H_{1}=\det H_{2}=1$ are obvious, and hence
\[
\det(A_{n}\cdots A_{1})=1.
\]
From this and from~\eqref{E:pho-det} it follows that
\[
\rho(A_{n}\cdots A_{1})\ge\sqrt{\det(A_{n}\cdots A_{1})}=1,\qquad \forall~\{A_{n}\}\in\setH^{\infty}.
\]
And then, due to~\eqref{E:pho-det},
\[
\|A_{n}\cdots A_{1}\|\ge \rho(A_{n}\cdots A_{1})\ge1,\qquad n=1,2,\ldots\,.
\]

The construction (for the chosen vector $x\in\mathbb{R}^{2}$) of a sequence of matrices $\{\bar{A}_{n}\}\in\setH^{\infty}$ for which~\eqref{E:AnA1x} holds is based on the following observation. Since $H_{2}$ is the $30^{\circ}$ rotation matrix, for the vector $x$ there are always at most $6$ matrices $\bar{A}_{1},\ldots,\bar{A}_{i}=H_{2}$ (i.e., $i\le 6$) whose sequential application to the vector $x$, under the assumption $\|x\|\le 1$, where $\|\cdot\|$ is the Euclidean norm in $\mathbb{R}^{2}$, leads to the hitting of the vector $\bar{A}_{i}\cdots \bar{A}_{1}x$ into one of the sectors $S$ or $\tilde{S}$, each of which has a gap $30^{\circ}$ and is symmetric about the abscissa axis, see Fig.~\ref{F:S}. In this case, applying the matrix $\bar{A}_{i+1}=H_{1}$ to the corresponding product $\bar{A}_{i}\cdots \bar{A}_{1}x$ leads, as can be easily calculated, to the hitting of the vector $x_{1}=\bar{A}_{i+1}\cdot \bar{A}_{i}\cdots \bar{A}_{1}x$ into one of the sets $H_{1}S$ or $H_{1}\tilde{S}$, and for this vector the following estimation holds: 
\begin{equation}\label{E:normx1}
\|x_{1}\|\le q \|x\|,
\end{equation}
where $q<1$ is a constant, see Fig.~\ref{F:S}.

\begin{figure}[!htbp]
\center%
\includegraphics{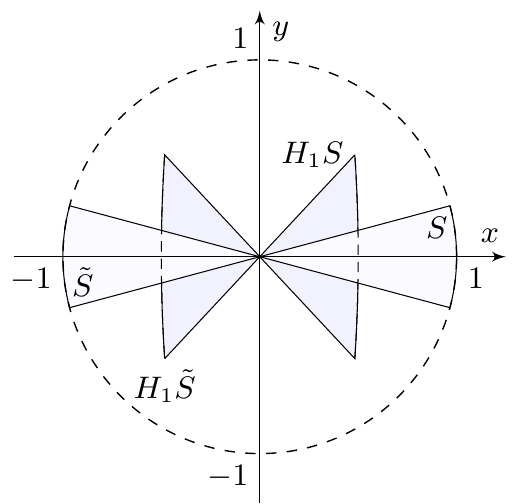}
\caption{Illustration for the example of Stanford}\label{F:S}
\end{figure}

Continuing this procedure, we obtain a sequence of vectors $x_{n}\to 0$ satisfying the relations $\|x_{n}\|\le q \|x_{n-1}\|$, from which follows~\eqref{E:AnA1x}.
\end{example}

\begin{remark}\label{R:ExtStanford}
The procedure described remains in force if instead of the matrices $H_{1}, H_{2}$ the matrices $\alpha H_{1}, \alpha H_{2}$ are taken, with the constant $\alpha > 1$ chosen such that the validity of the estimate~\eqref{E:normx1} with a constant $q=q_{\alpha} < 1$ is still guaranteed. In this case, as before, for each vector $x\in\mathbb{R}^{2}$ there is a sequence of matrices $\{A_{n}\}\in\setH^{\infty}$ for which the convergence~\eqref{E:AnA1x} is valid. But since now $\det A_{i}=\alpha^{2}$, instead of~\eqref{E:AnA1} a stronger statement holds:
\[
\|A_{n}\cdots A_{1}\|\ge \rho(A_{n}\cdots A_{1})\ge \sqrt{\det(A_{n}\cdots A_{1})}=\alpha^{n} \to\infty\quad\text{for}\quad n\to\infty.
\]
\end{remark}

\begin{example}\label{R:Z2}
In addition to Example 2 from~\cite{Stanford:SIAMJCO79} cited above, we note without proof another example of the set $\setH$ for which the relations~\eqref{E:AnA1} and~\eqref{E:AnA1x} hold, see~\cite[p.~50]{Emelyanov:e}. Denote by $\setH$ the set consisting of two matrices
\[
H_{1}=\begin{bmatrix} \cos\alpha& -\sin\alpha\\ \sin\alpha& \hphantom{-}\cos\alpha
\end{bmatrix},\quad
H_{2}=\begin{bmatrix} \cos\beta& -\gamma\sin\beta\\ \gamma^{-1}\sin\beta& \hphantom{-\gamma}\cos\beta
\end{bmatrix},
\]
where $\gamma>0$, $\gamma\neq 1$ and the quantities $\alpha$ and $\beta$ are incommensurable with $\pi$.

In this case, for any sequence of matrices $\{A_{n}\}\in\setH^{\infty}$ relations~\eqref{E:AnA1} hold, i.e., $\|A_{n}\cdots A_{1}\|\ge 1$ for $n=1,2,\ldots$, and furthermore, for each nonzero vector $x\in\mathbb{R}^{2}$, one can specify a sequence of matrices $\{\bar{A}_{n}\}\in\setH^{\infty}$ for which convergence~\eqref{E:AnA1x} takes place: $\bar{A}_{n}\cdots \bar{A}_{1}x\to 0$ for $n\to\infty$. Moreover, for any nonzero vector $x\in\mathbb{R}^{2}$, one can also specify a sequence of matrices $\{\tilde{A}_{i}\in\setH\}$ for which the divergence takes place: $\|\tilde{A}_{n}\cdots \tilde{A}_{1}x\|\to\infty$ for $n\to\infty$.
\end{example}

Now we can return to Question~\ref{Q:2} and answer it negatively. To do this, it suffices to consider the following sets of $(2\times2 )$-matrices:
\[
\setA:=\{I\},\quad \setB:=\{H_{1}, H_{2}\},
\]
where $H_{1}, H_{2}$ are matrices of Remark~\ref{R:ExtStanford}. As follows from Remark~\ref{R:ExtStanford}, in this case any sequence of matrices $\{A_{n}\}\in\setA^{\infty}$ (and such a sequence is unique: $A_{n}\equiv I$) is pointwise $\setB$-right-bounded, but not $\setB$-right-bounded, since due to Remark~\ref{R:ExtStanford}
\[
\|A_{n}B_{n}\cdots A_{1}B_{1}\|=\|B_{n}\cdots B_{1}\|\ge \alpha^{n} \to\infty\quad\text{for}\quad n\to\infty,
\]
for any sequence of matrices $B_{n}\in\setB=\{H_{1}, H_{2}\}$.

The example constructed permits the following generalization.
\begin{theorem}\label{T:nonPWB}
Let $\setA$ be a finite set of $(2\times2)$-matrices whose determinants are equal to $1$. Then there exists a finite set $\setB$ of nondegenerate $(2\times2)$-matrices such that every sequence of matrices $\{A_{n}\}\in\setA^{\infty}$ is pointwise $\setB$-right-bounded, but not $\setB$-right-bounded.
\end{theorem}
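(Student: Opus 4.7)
The plan is to reduce the problem to the scaled Stanford construction of Remark~\ref{R:ExtStanford} by populating $\setB$ with factors that ``cancel'' whichever $A \in \setA$ is presented. Fix matrices $H_1, H_2$ as in Example~\ref{Ex:Stan} and a scalar $\alpha > 1$ close enough to $1$ that the conclusion of Remark~\ref{R:ExtStanford} is available, i.e., such that for every $x \in \mathbb{R}^2$ there exists $\{C_n\} \in \{\alpha H_1, \alpha H_2\}^\infty$ with $C_n \cdots C_1 x \to 0$. Since every $A \in \setA$ satisfies $\det A = 1$ and is therefore invertible, the set
\[
\setB := \{\alpha A^{-1} H_i \colon A \in \setA,\ i \in \{1, 2\}\}
\]
is finite, consists of nondegenerate $(2 \times 2)$-matrices, and each of its elements has determinant $\alpha^2$.

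First I would show that no sequence $\{A_n\} \in \setA^\infty$ is $\setB$-right-bounded. For an arbitrary $\{B_n\} \in \setB^\infty$, multiplicativity of the determinant together with $\det A_i = 1$ and $\det B_i = \alpha^2$ gives
\[
|\det(A_n B_n \cdots A_1 B_1)| = \alpha^{2n},
\]
and then the inequality~\eqref{E:pho-det} forces $\|A_n B_n \cdots A_1 B_1\| \ge \alpha^n \to \infty$. Hence no choice of $\{B_n\}$ can keep the norms bounded.

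For the pointwise $\setB$-right-boundedness, fix any $\{A_n\} \in \setA^\infty$ and any $x \in \mathbb{R}^2$. By the choice of $\alpha$, pick $\{C_n\} \in \{\alpha H_1, \alpha H_2\}^\infty$ with $C_n \cdots C_1 x \to 0$, write $C_n = \alpha H_{i_n}$, and set $B_n := \alpha A_n^{-1} H_{i_n}$, which lies in $\setB$ by construction. Then $A_n B_n = \alpha H_{i_n} = C_n$, so the product telescopes:
\[
A_n B_n \cdots A_1 B_1 \, x \;=\; C_n \cdots C_1 \, x \;\longrightarrow\; 0,
\]
which is certainly bounded in $n$.

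The only genuinely nontrivial point is the existence of an $\alpha > 1$ for which Stanford's contraction survives the scaling: after at most six rotations by $\alpha H_2$ followed by one application of $\alpha H_1$, the net effect on a round of length at most $7$ must remain strictly contracting (i.e., the per-round factor $q_\alpha$ is less than $1$), which holds for $\alpha$ sufficiently close to $1$. Once this is granted via Remark~\ref{R:ExtStanford}, the telescoping identity $A_n B_n = C_n$ completes the reduction, and both required properties of the pair $(\setA, \setB)$ follow simultaneously.
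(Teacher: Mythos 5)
Your proposal is correct and follows essentially the same route as the paper: the same construction $\setB=\{\alpha A^{-1}H_{i}\}$ (the paper just absorbs the factor $\alpha$ into the matrices $H_{1},H_{2}$ of Remark~\ref{R:ExtStanford}), the same determinant/spectral-radius lower bound $\|A_{n}B_{n}\cdots A_{1}B_{1}\|\ge\alpha^{n}$ ruling out $\setB$-right-boundedness, and the same telescoping identity $A_{n}B_{n}=\alpha H_{i_{n}}$ for the pointwise part.
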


\begin{proof}
Let us choose arbitrary matrices $H_{1}, H_{2}$, defined in Remark~\ref{R:ExtStanford}. Furthermore, for each matrix $A_{n}\in\setA$ we construct two matrices
\[
B_{1,n}=A_{n}^{-1}H_{1},\quad B_{2,n}=A_{n}^{-1}H_{2}
\]
and define $\setB$ to be the set of all matrices of the form $B_{1,n}, B_{2,n}$.

Now note that due to the definition of the matrices $H_{1}, H_{2}$ in Remark~\ref{R:ExtStanford},
\[
\det H_{1}=\det H_{2}=\alpha^{2}>1,
\]
which means that
\[
\det A_{n}=1,\quad \det B_{n}=\alpha^{2} > 1\qquad \forall~ A_{n}\in\setA, B_{n}\in\setB.
\]
Therefore,
\[
\|A_{n}B_{n}\cdots A_{1}B_{1}\|\ge\sqrt{\det(A_{n}B_{n}\cdots A_{1}B_{1})}= \alpha^{n}\to\infty\quad\text{for}\quad n\to\infty,
\]
for any sequence of matrices $\{A_{n}\}\in\setA^{\infty}$ and any sequence of matrices $\{B_{n}\}\in\setB^{\infty}$, from which it follows that no sequence of matrices $\{A_{n}\}\in\setA^{\infty}$ is $\setB$-right bounded.

It remains to show that nevertheless any sequence of matrices $\{A_{n}\}\in\setA^{\infty}$ is pointwise $\setB$-right-bounded. To do this, we specify any sequence of matrices $\{A_{n}\}\in\setA^{\infty}$, and a nonzero vector $x\in\mathbb{R}^{2}$. Next, we choose a sequence of matrices $H_{i_{n}}$ that satisfy the relation
\begin{equation}\label{E:defHin}
H_{i_{n}}\cdots H_{i_{1}}x\to0.
\end{equation}
This is possible because of the remark~\ref{R:ExtStanford}. Finally, we construct a sequence of matrices $\{B_{n}\}\in\setB^{\infty}$ by taking
\[
B_{n}=B_{i_{n},n}=A_{n}^{-1}H_{i_{n}}.
\]
With the chosen sequence of matrices $\{B_{n}\}\in\setB^{\infty}$, the corresponding sequence $A_{n}B_{n}\cdots A_{1}B_{1}x$ takes the following form
\[
A_{n}B_{n}\cdots A_{1}B_{1}x =H_{i_{n}}\cdots H_{i_{1}}x,
\]
whence, due to~\eqref{E:defHin} $A_{n}B_{n}\cdots A_{1}B_{1}x\to0$. The pointwise $\setB$-right-boundedness of the sequence of matrices $\{A_{n}\}\in\setA^{\infty}$ is proved.
\end{proof}



\begin{thebibliography}{10}
\def\mrref#1{\href{https://www.ams.org/mathscinet-getitem?mr=MR#1}{MR~#1}}
\def\zblref#1{\href{https://zbmath.org/?q=an\%3A#1}{Zbl~#1}}
\expandafter\ifx\csname url\endcsname\relax
  \def\url#1{\texttt{#1}}\fi
\expandafter\ifx\csname urlprefix\endcsname\relax\def\urlprefix{URL }\fi
\expandafter\ifx\csname href\endcsname\relax
  \def\href#1#2{#2} \def\path#1{#1}\fi

\bibitem{BerWang:LAA92}
M.~A. Berger and Y.~Wang, \emph{Bounded semigroups of matrices}, Linear Algebra
  Appl. \textbf{166} (1992), 21--27,
  \href{https://doi.org/10.1016/0024-3795(92)90267-E}
  {\path{doi:10.1016/0024-3795(92)90267-E}}. \mrref{1152485}.
  \zblref{0818.15006}.

\bibitem{DaubLag:LAA92}
I.~Daubechies and J.~C. Lagarias, \emph{Sets of matrices all infinite products
  of which converge}, Linear Algebra Appl. \textbf{161} (1992), 227--263,
  \href{https://doi.org/10.1016/0024-3795(92)90012-Y}
  {\path{doi:10.1016/0024-3795(92)90012-Y}}. \mrref{1142737}.
  \zblref{0746.15015}.

\bibitem{DJM:SCL20}
C.~P. Dettmann, R.~M. Jungers, and P.~Mason, \emph{Lower bounds and dense
  discontinuity phenomena for the stabilizability radius of linear switched
  systems}, Systems Control Lett. \textbf{142} (2020), 104737, 6,
  \href{https://doi.org/10.1016/j.sysconle.2020.104737}
  {\path{doi:10.1016/j.sysconle.2020.104737}},
  \href{https://arxiv.org/abs/2002.10369} {\path{arXiv:2002.10369}}.
  \mrref{4120003}. \zblref{1451.93269}.

\bibitem{Emelyanov:e}
S.~V. Emel'janov, \emph{Sistemy avtomaticheskogo upravleniya s peremenno{\u{i}}
  strukturo{\u{i}}}, Izdat. ``Nauka'', Moscow, 1967, in Russian.
  \mrref{0243850}.

\bibitem{Gurv:LAA95}
L.~Gurvits, \emph{Stability of discrete linear inclusion}, Linear Algebra Appl.
  \textbf{231} (1995), 47--85,
  \href{https://doi.org/10.1016/0024-3795(95)90006-3}
  {\path{doi:10.1016/0024-3795(95)90006-3}}. \mrref{1361100}.
  \zblref{0845.68067}.

\bibitem{Hartfiel:02}
D.~J. Hartfiel, \emph{Nonhomogeneous matrix products}, World Scientific
  Publishing Co. Inc., River Edge, NJ, 2002. \mrref{1878339}.
  \zblref{1011.15006}.

\bibitem{JunMas:SIAMJCO17}
R.~M. Jungers and P.~Mason, \emph{On feedback stabilization of linear switched
  systems via switching signal control}, SIAM J. Control Optim. \textbf{55}
  (2017), no.~2, 1179--1198, \href{https://doi.org/10.1137/15M1027802}
  {\path{doi:10.1137/15M1027802}}, \href{https://arxiv.org/abs/1601.08141}
  {\path{arXiv:1601.08141}}. \mrref{3633780}. \zblref{1361.93051}.

\bibitem{Koz:AiT90:10:e}
V.~S. Kozyakin, \emph{Absolute stability of systems with asynchronous
  sampled-data elements}, Autom. Remote Control \textbf{51} (1990), no.~10,
  1349--1355. \mrref{1088520}. \zblref{0737.93055}.

\bibitem{Koz:DDNS18}
V.~Kozyakin, \emph{On convergence of infinite matrix products with alternating
  factors from two sets of matrices}, Discrete Dyn. Nat. Soc. \textbf{Art. ID
  9216760} (2018), 1--5, \href{https://doi.org/10.1155/2018/9216760}
  {\path{doi:10.1155/2018/9216760}}, \href{https://arxiv.org/abs/1712.06356}
  {\path{arXiv:1712.06356}}. \mrref{3798756}. \zblref{1417.93254}.

\bibitem{LJP:HSCC16}
B.~Legat, R.~M. Jungers, and P.~A. Parrilo, \emph{Generating unstable
  trajectories for switched systems via dual sum-of-squares techniques},
  Proceedings of the 19th International Conference on Hybrid Systems:
  Computation and Control (New York, NY, USA), HSCC '16, ACM, 2016, pp.~51--60,
  \href{https://doi.org/10.1145/2883817.2883821}
  {\path{doi:10.1145/2883817.2883821}}. \zblref{1364.93315}.

\bibitem{Stanford:SIAMJCO79}
D.~P. Stanford, \emph{Stability for a multi-rate sampled-data system}, SIAM J.
  Control Optim. \textbf{17} (1979), no.~3, 390--399,
  \href{https://doi.org/10.1137/0317029} {\path{doi:10.1137/0317029}}.
  \mrref{528902}. \zblref{0439.93041}.

\end{thebibliography}

\end{document}